\documentclass{amsart}
\usepackage[utf8]{inputenc}
\usepackage{tikz,hyperref}
\usetikzlibrary{arrows}
\usepackage{caption}


\newtheorem{theorem}{Theorem}[section]
\newtheorem{proposition}[theorem]{Proposition}
\newtheorem{lemma}[theorem]{Lemma}
\newtheorem{corollary}[theorem]{Corollary}
\theoremstyle{definition}
\newtheorem{remark}[theorem]{Remark}
\newtheorem{definition}[theorem]{Definition}
\newtheorem{notation}[theorem]{Notation}
\newtheorem{example}[theorem]
{Example}

\newcommand{\PP}{\mathcal P}
\newcommand{\supp}{\operatorname{Supp}}
\newcommand{\used}{\hat\mu}
\newcommand{\TT}{\mathbb T}
\newcommand{\RR}{\mathbb R}

\newcommand{\BB}{\mathcal B}

\newcommand{\CC}{\mathbb C}
\newcommand{\ZZ}{\mathbb Z}

\newcommand{\TTT}{\mathcal E} 
\renewcommand{\L}{\mathbb Z} 

\newcommand{\permop}{\operatorname{tw-perm}}
\renewcommand{\matrix}{tw-matrix}
\newcommand{\matrices}{tw-matrices}
\newcommand{\perm}{tw-permanent}
\newcommand{\kernal}{tw-kernel}
\newcommand{\sing}{tw-singular}
\newcommand{\nsing}{tw-nonsingular}
\newcommand{\minor}{minor}

\newcommand{\smooth}{c-smooth}

\newcommand{\vx}{\mathbf x}
\newcommand{\vy}{\mathbf y}

\newcommand{\ds}{4}
\newcommand{\dcdot}{1.5pt}
\newcommand{\dcgrid}{1}
\newcommand{\markpoint}[1]{\draw [fill] (#1) circle [radius=1.5pt];}

\newcommand{\cxx}{(\ds,\dcgrid)}
\newcommand{\cxz}{(\ds + \dcgrid,\dcgrid)}
\newcommand{\czz}{(\ds + 2*\dcgrid,\dcgrid)}
\newcommand{\cxy}{(\ds +\dcgrid,0)}
\newcommand{\cyz}{(\ds + 2*\dcgrid, 0)}
\newcommand{\cyy}{(\ds + 2*\dcgrid,-\dcgrid)}

\newcommand{\cxxy}{(\ds, 0)}
\newcommand{\cxyy}{(\ds + \dcgrid, -\dcgrid)}
\newcommand{\cxxyy}{(\ds, -\dcgrid)}

\newcommand{\conicdualcomplexdots}{
\draw [fill] \cxx node [above] {$x^2$} circle [radius =\dcdot];
\draw [fill] \cxz node [above] {$x$} circle [radius = \dcdot];
\draw [fill] \czz node [above] {$0$} circle [radius = \dcdot];
\draw [fill] \cxy node [left] {$xy$} circle [radius = \dcdot];
\draw [fill] \cyz  node [right] {$y$} circle [radius = \dcdot];
\draw [fill] \cyy node [below] {$y^2$} circle [radius = \dcdot];
}

\newcommand{\conicdualcomplex}{
\conicdualcomplexdots
\draw (\ds, \dcgrid) -- (\ds + 2*\dcgrid, \dcgrid) -- (\ds + 2*\dcgrid, -\dcgrid) -- (\ds, \dcgrid);
}

\newcommand{\ppdualcomplex}{
\conicdualcomplexdots
\draw [fill] \cxxy node [left] {$x^2y$} circle [radius =\dcdot];
\draw [fill] \cxyy node [below] {$xy^2$} circle [radius = \dcdot];
\draw [fill] \cxxyy node [below] {$x^2y^2$} circle [radius = \dcdot];
}

\newcommand{\standardconic}{
\draw [<->] (1,2) -- (1,1) -- (0,0) -- (0,-1) -- (-1,-2);
\draw [->] (0,-1) -- (2,-1);
\draw [->] (1,1) -- (2,1);
\draw [->] (0,0) -- (-1,0) -- (-1,2);
\draw [->] (-1,0) -- (-3,-2);
\node at (1.5,1.5) {$0$};
\node at (1,0) {$y$};
\node at (.7,-1.5) {$y^{2}$};
\node at (-.7,-.7) {$x y$};
\node at (-1.7,0) {$x^{2}$};
\node at (0, 1) {$x$};
\conicdualcomplex
\draw (\ds + \dcgrid, \dcgrid) -- (\ds + \dcgrid, 0) -- (\ds + 2*\dcgrid, 0) -- (\ds + \dcgrid, \dcgrid);
}

\newcommand{\otherconic}{
\draw [<->] (-2,-.5) -- (-1,.5) --(0,.5) -- (.5,0) -- (.5,-1) -- (-.5,-2);
\draw [->] (-1,.5) -- (-1,1.5);
\draw [->] (0,.5) -- (0,1.5);
\draw [->] (.5,0) -- (1.5,0);
\draw [->] (.5,-1) -- (1.5,-1);
\node at (1,1) {$0$};
\node at (-.5,1) {$x$};
\node at (1,-.5) {$y$};
\node at (-.5,-.5) {$xy$};
\node at (-1.5,1) {$x^2$};
\node at (1,-1.5) {$y^2$};
\conicdualcomplex
\draw (\ds +\dcgrid,\dcgrid) -- (\ds +\dcgrid,0) -- (\ds+2*\dcgrid,0);
\draw (\ds + \dcgrid,0) -- (\ds +2*\dcgrid,\dcgrid);
}

\newcommand{\cxm}[1]{\node [above right] at (\ds + \dcgrid/2, \dcgrid/2) {$#1$};}
\newcommand{\cym}[1]{\node [above right] at (\ds + 3*\dcgrid/2,-\dcgrid/2) {$#1$};}
\newcommand{\coxm}[1]{\node [above left] at (\ds + 3*\dcgrid/2, \dcgrid/2) {$#1$};}
\newcommand{\coym}[1]{\node [below right] at (\ds + 3*\dcgrid/2,\dcgrid/2) {$#1$};}

\title{Determining Tropical Hypersurfaces}
\author{Drew Johnson}

\begin{document}

\begin{abstract}
We consider the question of when points in tropical affine space uniquely determine a tropical hypersurface. We introduce a notion of multiplicity of points so that this question may be meaningful even if some of the points coincide. We give a geometric/combinatorial way and a tropical linear-algebraic way to approach this question.  First, given a fixed hypersurface, we show how one can determine whether points on the hypersurface determine it by looking at a corresponding marking of the dual complex. With a regularity condition on the dual complex and when the number of points is minimal, we show that our condition is equivalent to the connectedness of an appropriate sub-complex. Second, we introduce notions of non-singularity of tropical matrices and solutions to tropical linear equations that take into account our notion of multiplicity and prove a Cramer's Rule type theorem relating them.
\keywords{tropical geometry, tropical linear algebra}
\end{abstract}
\maketitle
\section{Introduction}
We first recall the classical situation. Given a lattice polytope $\Delta \subset \RR^{n}$, let $\L(\Delta) = \Delta \cap \ZZ^n$ and $|\Delta| = |\ZZ(\Delta)|$. One can consider the linear system of hypersurfaces in $(\CC^{*})^n$ given by Laurent polynomials of the form
\[
    \sum_{I=(i_1, \dots, i_n) \in \L(\Delta)} a_I x_1^{i_1} \cdots x_n^{i_n}.
\]
One can also view these hypersurfaces as lying in the toric variety corresponding to $\Delta$. 

 Requiring the hypersurface to pass through a given point in $(\CC^*)^n$ imposes a linear condition on the coefficients $a_I$, so we see that we require $|\Delta| - 1$ points in general position to uniquely determine the hypersurface (up to uniform scaling of the coefficients). Given an explicit set of $|\Delta|-1$ points, one can check whether these points are general by verifying that the $(|\Delta|-1) \times |\Delta|$ matrix formed by evaluating monomials at the points is full rank. To compute the coefficients, one solves the associated homogeneous linear system. For example, one could accomplish both tasks via Cramer's rule by computing the maximal minors of the matrix.
 
\subsection{Tropical hypersurfaces and higher codimension conditions} \label{mult}
Recall that a tropical polynomial
\[
 f = \bigoplus_{I=(i_1, \dots, i_n) \in \L(\Delta)} a_I \odot x_1^{\odot i_1} \odot \cdots \odot x_n^{\odot i_n}
\]
defines a subset $V(f) \subset \TT^n$ called a tropical hypersurface  by the condition $\vx \in V(f)$ if 
\[
    \min_{I} \left\{a_I + \sum_{k=1}^n i_k x_k\right\}
\]
is achieved by at least two choices of $I$. (The tropical preliminaries needed in this paper will be reviewed in more detail in Section \ref{trophyp}.)

In this paper, we introduce an extension of this notion by saying that $V(f)$ has multiplicity $m$ at $p$ if the minimum is achieved by precisely $m+1$ choices of $I$. Requiring $V(f)$ to have multiplicity at least $m$ at $p$ is a codimension $m$ condition on the coefficients $a_I$. We wish to study when points with assigned multiplicity uniquely determine a hypersurface.
 
\subsection{Geometric/combinatoric}
In our first approach to this problem, we start with a hypersurface with some fixed points and ask whether the hypersurface can be deformed while still containing the points.

To illustrate, consider the curves in Figures \ref{disc} and \ref{goodconic}. In each figure, the curve with its fixed points is shown on the left. The dual complex is shown on the right. Each edge in the dual complex whose corresponding edge in the curve has a fixed point is darkened. One can see from the figures that if the darkened subcomplex is disconnected as in Figure \ref{disc}, then simultaneously decreasing the coefficients of a component will give a deformation. On the other hand, if the darkened subcomplex is connected as in Figure \ref{goodconic}, then the curve is uniquely determined. Summarizing, we have
\begin{proposition} \label{prop:fe}
Let $X$ be a tropical curve with specified fixed points in the interior of its edges. Assume every 2-polytope of the dual complex contains exactly 3 lattice points. Then $X$ is uniquely determined if and only if the corresponding subgraph of the dual complex is connected.
\end{proposition}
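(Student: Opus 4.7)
The plan is to translate the incidence conditions for fixed points into linear constraints on the coefficients $(a_I)$ of a tropical polynomial defining $X$, and then identify the connectedness of the subgraph with a rank condition on the resulting system.

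First, I would observe that each fixed point $p$ in the interior of the edge of $X$ dual to $(I,J)$ satisfies $a_I + I\cdot p = a_J + J\cdot p$, strictly less than $a_K + K\cdot p$ for every other $K \in \L(\Delta)$. For a perturbation $a_I' = a_I + b_I$ with $b \in \RR^{|\Delta|}$ sufficiently small that the combinatorial type of $X$ is preserved, the new curve $V(f')$ contains $p$ in the interior of the same edge if and only if $b_I = b_J$: the tie between the monomials $I$ and $J$ is preserved iff $b_I = b_J$, while the strict inequalities with the other $K$'s persist automatically for small $b$.

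Second, the system $\{b_I = b_J : (I,J) \text{ marked}\}$ is precisely the kernel of the signed incidence matrix of the marked subgraph $G$ on the vertex set $\L(\Delta)$. Standard graph theory gives $\dim \ker = c(G)$, the number of connected components of $G$. Its 1-dimensional subspace of constant vectors is the direction of global tropical scaling, which does not alter $X$, so the space of non-trivial infinitesimal deformations preserving the fixed points has dimension $c(G) - 1$. Both implications then follow: if $G$ is connected so that $c(G) = 1$, this space is trivial and $X$ is rigid; if $G$ is disconnected, pick a component $S$ strictly contained in $\L(\Delta)$, define $b_I = 1$ for $I \in S$ and $b_I = 0$ otherwise, and note that the constraints hold on every marked edge since none crosses between components, so for small $\epsilon>0$ the perturbation $a_I' = a_I + \epsilon b_I$ yields a distinct polynomial still defining a curve through all fixed points. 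Any isolated lattice point of $\L(\Delta)$ not incident to any marked edge appears here as its own singleton component.

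The main anticipated obstacle is delineating the precise scope of "uniquely determined": my argument establishes rigidity against perturbations preserving the combinatorial type of $X$, but in principle there could be tropical polynomials with different dual subdivisions also passing through the same fixed points. I plan to exclude these by exploiting the tropical convexity of the locus of polynomials whose vanishing locus contains the fixed points — any other $f'$ in this locus can be joined to $f$ by a tropical segment inside it, and the infinitesimal rigidity at $f$ forces the segment to be trivial up to scaling. The triangulation hypothesis is essential here: requiring every $2$-polytope of the dual complex to contain exactly $3$ lattice points forces every lattice point of $\Delta$ to be a vertex of the subdivision, so that $G$ is well-defined as a graph on the full vertex set $\L(\Delta)$ and the rank computation actually yields $c(G)$ as claimed.
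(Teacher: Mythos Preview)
Your proposal is correct and matches the paper's approach: the paper establishes this statement as the special case of its Proposition~\ref{hypermult}, where your perturbation constraints $b_I=b_J$ along marked edges are exactly what make a subset $L$ (the support of a $\{0,1\}$-valued $b$) ``deformable'' in the paper's sense, and your tropical-segment reduction $f\oplus(t\odot f')$ from global to infinitesimal uniqueness is precisely the device the paper uses. Regarding your anticipated obstacle: since each fixed point $p$ lies in the \emph{interior} of its edge, the strict inequalities $a_K+K\cdot p>a_I+I\cdot p$ for $K\neq I,J$ persist under \emph{any} sufficiently small perturbation $b$, so your equation $b_I=b_J$ already characterizes all small $b$ with $p\in V(f')$, not merely the type-preserving ones---hence the one-sided perturbation coming from the tropical segment at the first critical $t$ is directly covered by your linear analysis, and no extra argument is needed.
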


Notice that this immediately implies that at least $|\Delta|-1$ points are required for such a hypersurface to be uniquely determined, agreeing well with the classical case.

Proposition \ref{prop:fe} overlaps with results in, e.g., Mikhalkin's seminal paper \cite{mikhalkin_enumerative_2005}, or Gathmann and Markwig's paper \cite{gathmann_numbers_2007}. In papers like these, the interest is in counting the number of curves of a fixed degree and genus through general points and showing that the number is independent of the general points. Our paper rather remains in the situation where only one curve is expected, generalizing Proposition \ref{prop:fe} in different ways. 
Our Proposition \ref{hypermult} will give a condition that works for a hypersurface of any dimension and any configuration of points (allowing them to lie in higher codimensional polyhedra) and takes into account the multiplicities of Section \ref{mult}. This condition lacks the appeal of the connectedness condition of Proposition \ref{prop:fe}, but by imposing additional hypotheses, our Theorem \ref{thm:con} recovers an equally attractive (but much more general) statement.

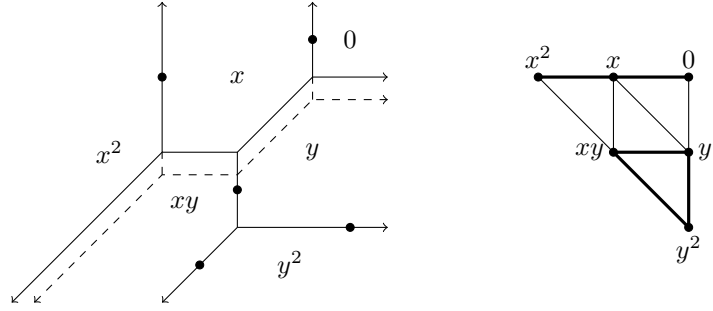
\begin{figure}
\begin{center}
\begin{tikzpicture}
\standardconic
\markpoint{-1,1}
\markpoint{0,-.5}
\markpoint{-.5, -1.5}
\markpoint{1.5,-1}
\markpoint{1,1.5}
\draw [<->, dashed] (-2.7,-2) -- (-1,-.3) -- (0,-.3) -- (1,.7) -- (2,.7);
\draw [dashed] (-1,-.3) -- (-1,0);
\draw [dashed] (1,.7) -- (1,1);
\draw [very thick] (\ds, \dcgrid) -- (\ds + 2*\dcgrid, \dcgrid);
\draw [very thick] (\ds + \dcgrid,0) -- (\ds + 2*\dcgrid,-\dcgrid) -- (\ds + 2*\dcgrid,0) -- (\ds + \dcgrid,0);
\end{tikzpicture}
\caption[A disconnected marked subcomplex gives a deformation]{A disconnected marked subcomplex gives a deformation.}
\label{disc}
\end{center}
\end{figure}

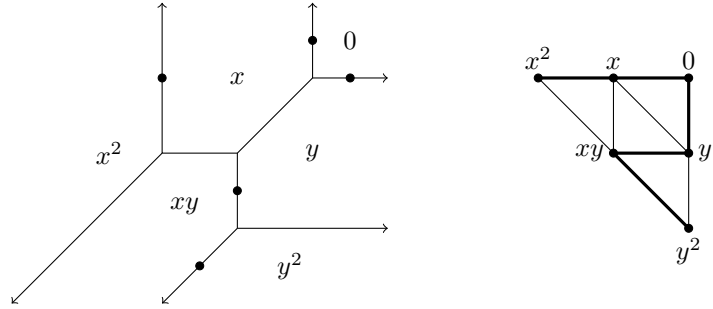
\begin{figure}
\begin{center}
\begin{tikzpicture}
\standardconic
\markpoint{-1,1}
\markpoint{0,-.5}
\markpoint{-.5, -1.5}
\markpoint{1.5,1}
\markpoint{1,1.5}
\draw [very thick] (\ds, \dcgrid) -- (\ds + 2*\dcgrid, \dcgrid) -- (\ds + 2*\dcgrid,0) -- (\ds + \dcgrid,0) -- (\ds + 2*\dcgrid,-\dcgrid);
\end{tikzpicture}
\end{center}
\caption[A connected marked subcomplex]{A connected marked subcomplex.}
\label{goodconic}
\end{figure}

\subsection{Tropical Cramer's rule}
Tropical linear algebra can be subtle. For example, Akian, Bapat, and Gaubert \cite{akian_max-plus_2006} give five different notions of the rank of a tropical matrix, all inequivalent. A program to address some of these issues is given by the supertropical linear algebra of Izhakian \cite{izhakian_supertropical_2013}. 

The tropical linear algebra most relevant to this paper can be found in Richter-Gebert, Sturmfels, and Theobald \cite{richter-gebert_first_2005}, where the authors give an algebraic way to determine whether points uniquely determine a tropical hypersurface, which we review here. A vector $\vx = (x_1, \dots, x_N) \in \TT^N$ is said to be in the \emph{tropical kernel} of a $M \times N$ tropical matrix $A$ if for each row $i$,
\[
    \bigoplus_j x_j \odot A_{ij} = \min_j \{x_j + A_{ij}\}
\]
is achieved at least twice.

Now let $A$ be an $N \times N$ tropical matrix. The \emph{tropical permanent} of $A$ is
\begin{equation}
    \bigoplus_{\sigma \in S_n}  \bigodot_i A_{i,\sigma(i)}  = \min_{\sigma \in S_n} \left\{ \sum_i A_{i, \sigma(i)} \right\}. \label{tropperm}
\end{equation}
We say that $A$ (or its permanent) is \emph{tropically singular} if the minimum in the tropical permanent is achieved at least twice. $A$ is \emph{tropically nonsingular} otherwise. 

The following are the two fundamental results.
\begin{theorem}[\cite{richter-gebert_first_2005}, Lemma 5.1] \label{str_square}
An $N \times N$ tropical matrix has a vector in its tropical kernel if and only if it is tropically singular.
\end{theorem}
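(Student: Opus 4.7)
The plan is to prove the two implications independently.

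\textit{Singular implies kernel vector.} I would construct the tropical cofactor vector $\vx^{\star} = (c_1, \dots, c_N)$, where $c_j$ is the tropical permanent of the $(N-1) \times (N-1)$ submatrix obtained from $A$ by deleting row $1$ and column $j$, and show that $\vx^{\star}$ lies in the tropical kernel. The key reduction is to verify, for every row $i$, that $\min_j \{A_{ij} + c_j\}$ is achieved by at least two indices. When $i = 1$, a tropical Laplace expansion along row $1$ identifies this expression with $\permop(A)$, so singularity supplies two minimizers directly. When $i \neq 1$, the same bookkeeping identifies the expression with $\permop(A^{(i)})$, where $A^{(i)}$ is obtained by replacing row $1$ of $A$ with a copy of row $i$. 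Because $A^{(i)}$ has two identical rows, any minimizing permutation $\sigma$ produces a distinct second minimizer $\sigma'$ by swapping the column assignments of rows $1$ and $i$; since $\sigma(1) \neq \sigma(i)$, these two permutations pair row $1$ with two different columns, delivering the required pair of $j$'s.

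\textit{Kernel vector implies singular.} I would argue by contradiction via an alternating-cycle construction. Suppose $\vx$ lies in the kernel, set $m_i = \min_j \{A_{ij} + x_j\}$ and $S_i = \{j : A_{ij} + x_j = m_i\}$ (so $|S_i| \geq 2$), and assume for contradiction that $\permop(A)$ is uniquely attained by some permutation $\sigma^{\star}$. Form a directed graph $D$ on $[N]$ by drawing an arc $i \to i'$ whenever some $j \in S_i \setminus \{\sigma^{\star}(i)\}$ satisfies $\sigma^{\star}(i') = j$. The hypothesis $|S_i| \geq 2$ forces every vertex of $D$ to have out-degree at least one, so $D$ contains a directed cycle $i_1 \to \dots \to i_k \to i_1$ with accompanying columns $j_\ell = \sigma^{\star}(i_{\ell+1}) \in S_{i_\ell}$. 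Define a permutation $\tilde\sigma$ by $\tilde\sigma(i_\ell) = j_\ell$ along the cycle and $\tilde\sigma(i) = \sigma^{\star}(i)$ elsewhere; this is $\sigma^{\star}$ pre-composed with a cyclic rotation on $\{i_1, \dots, i_k\}$ and genuinely differs from $\sigma^{\star}$ at every $i_\ell$.

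The crux of the argument, and its main obstacle, is to show that $\tilde\sigma$ still achieves $\permop(A)$. The contributions to the two permanent expressions agree off the cycle; on the cycle, the tight identity $A_{i_\ell, j_\ell} = m_{i_\ell} - x_{j_\ell}$ (coming from $j_\ell \in S_{i_\ell}$), combined with the general inequality $A_{i_\ell, \sigma^{\star}(i_\ell)} \geq m_{i_\ell} - x_{\sigma^{\star}(i_\ell)}$, yields
\[
\sum_\ell A_{i_\ell, j_\ell} - \sum_\ell A_{i_\ell, \sigma^{\star}(i_\ell)} \leq \sum_\ell \left( x_{\sigma^{\star}(i_\ell)} - x_{j_\ell} \right).
\]
The subtle point is that the right-hand side telescopes to zero: since $j_\ell = \sigma^{\star}(i_{\ell+1})$, the two sums $\sum_\ell x_{\sigma^{\star}(i_\ell)}$ and $\sum_\ell x_{j_\ell}$ are reindexings of one another and therefore cancel. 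This upgrades the a priori weak inequality to the equality $\sum_i A_{i, \tilde\sigma(i)} \leq \sum_i A_{i,\sigma^{\star}(i)} = \permop(A)$, so $\tilde\sigma$ is a second minimizer, contradicting the assumed uniqueness of $\sigma^{\star}$.
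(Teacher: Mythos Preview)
Your second implication (kernel vector implies singular) is correct and pleasantly direct: the alternating-cycle argument and the telescoping are fine. This differs from the paper's route, which rescales so that $\mathbf 0$ is in the kernel, builds a complementary \emph{linkage hypergraph}, applies Lemma~\ref{game} to make it connected, and then reads off two minimizing permutations as the two good orientations of the resulting hypergraph (Corollary~\ref{orientation-sing}). Your argument avoids all of that machinery; the paper's machinery, on the other hand, is set up to handle the weighted generalization (Theorem~\ref{square}) and is reused in the proof of Theorem~\ref{n-1xn}.

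Your first implication (singular implies kernel vector) has a genuine gap. You claim that for $i=1$ the Laplace identity $\min_j\{A_{1j}+c_j\}=\permop(A)$ together with singularity gives two minimizing indices $j$. But singularity only gives two minimizing \emph{permutations} $\sigma,\tau$; if $\sigma(1)=\tau(1)$ they correspond to the \emph{same} $j$ in the row-$1$ expansion. Concretely, take
\[
A=\begin{pmatrix}0&100&100\\100&0&0\\100&0&0\end{pmatrix}.
\]
Here $\permop(A)=0$ is achieved by the identity and by the transposition $(2\,3)$, both of which send $1\mapsto 1$, so $A$ is tropically singular. The row-$1$ cofactors are $(c_1,c_2,c_3)=(0,100,100)$, and $A_{1j}+c_j$ equals $(0,200,200)$, whose minimum is attained only at $j=1$. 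Hence your cofactor vector is \emph{not} in the tropical kernel of $A$.

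The fix is simple: since $\sigma\neq\tau$, there is some row $r$ with $\sigma(r)\neq\tau(r)$; perform the cofactor expansion along row $r$ instead of row $1$. Your argument for rows $i\neq r$ goes through unchanged, and for $i=r$ the two columns $\sigma(r)$ and $\tau(r)$ now genuinely witness the minimum twice (because $A_{r,\sigma(r)}+c^{(r)}_{\sigma(r)}=A_{r,\tau(r)}+c^{(r)}_{\tau(r)}=\permop(A)$). The paper's own proof of this direction is again structurally different: it rescales via Lemma~\ref{rescalesquare} so that both minimizing permutations sit on zero entries, then iteratively grows a submatrix with $\mathbf 0$ in its kernel until it fills out all of $A$.
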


A maximal minor of a $(N-1) \times N$ tropical matrix is the tropical permanent of a $(N-1) \times (N-1)$ submatrix obtained by deleting a column.
\begin{theorem}[\cite{richter-gebert_first_2005}, Theorem 5.3] \label{thm:str_n-1xn}
Let $A$ be an $(N-1) \times N$ tropical matrix. Then the vector of maximal minors of $A$ is in its tropical kernel. This vector is the unique vector in the kernel (up to tropical scaling) if and only if every maximal minor is tropically nonsingular.
\end{theorem}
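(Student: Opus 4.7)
The proof splits into two parts. For the existence claim that $\vx = (x_1, \dots, x_N)$ lies in the tropical kernel of $A$, my plan is a row-duplication trick combined with a Laplace-style expansion. Fix a row index $i$ and form the $N \times N$ matrix $B_i$ whose first $N-1$ rows are those of $A$ and whose $N$-th row is a copy of row $i$. Partitioning permutations of $[N]$ by $j = \sigma(N)$ gives
\[
  \min_{\sigma \in S_N} \sum_{k=1}^N (B_i)_{k,\sigma(k)} = \min_j \Bigl(A_{i,j} + \min_{\sigma : \sigma(N)=j} \sum_{k=1}^{N-1} A_{k,\sigma(k)}\Bigr) = \min_j (A_{i,j} + x_j),
\]
since the inner minimum ranges over bijections $[N-1] \to [N] \setminus \{j\}$, producing the tropical permanent of $A^{(j)}$. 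Because rows $i$ and $N$ of $B_i$ coincide, any minimizer $\sigma$ admits a partner $\sigma \circ (i,N)$ that realizes the same sum and satisfies $(\sigma \circ (i,N))(N) = \sigma(i) \neq \sigma(N)$. Hence the outer minimum is attained at two distinct values of $j$, verifying the kernel condition at row $i$.

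For the ``only if'' direction of the second assertion, I argue by contrapositive. Suppose the maximal minor obtained by deleting column $j_0$ is tropically singular. Theorem \ref{str_square} applied to the $(N-1) \times (N-1)$ submatrix $A^{(j_0)}$ produces a $\vy' \in \TT^{N-1}$ in its kernel. Extending to $\vy \in \TT^N$ by setting $y_{j_0} = \infty$ (the tropical additive identity) yields a kernel vector of $A$, since the summand at column $j_0$ equals $\infty$ and is never a minimizer; the kernel condition for $\vy$ at each row of $A$ therefore reduces to the kernel condition for $\vy'$ at the corresponding row of $A^{(j_0)}$. As long as $x_{j_0}$ is finite, $\vy$ is not a tropical scaling of $\vx$, contradicting uniqueness.

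The main obstacle is the converse ``if'' direction: assuming every maximal minor is tropically nonsingular, show that any kernel vector $\vy$ is a tropical scaling of $\vx$. My plan is to tropically scale $\vy$ so that $y_j \ge x_j$ everywhere with equality somewhere, and set $S = \{j : y_j = x_j\}$, $T = \{j : y_j > x_j\}$. Assuming $T \ne \emptyset$ for contradiction, I would analyze the kernel condition for $\vy$ row by row: since $A_{i,j}+y_j = A_{i,j}+x_j$ for $j \in S$ and is strictly larger for $j \in T$, the double-minimum condition should force the restriction $\vx|_S$ into the tropical kernel of the submatrix $A|_S$ of $A$ with columns restricted to $S$. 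When $|T| = 1$, $A|_S = A^{(j_0)}$ is an $(N-1) \times (N-1)$ matrix, and Theorem \ref{str_square} contradicts the nonsingularity of that maximal minor. The technical heart of the argument is the reduction to $|T|=1$, either by minimizing $|T|$ over counterexamples $\vy$ or by modifying $\vy$ through tropical combinations with $\vx$, together with an exchange argument to control the rows where the $\vy$-minimum is realized with contributions from columns in $T$.
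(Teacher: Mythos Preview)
Your argument for the first claim---that the vector $\vx$ of maximal minors lies in the tropical kernel---via row duplication and Laplace expansion along the duplicated row is correct and arguably cleaner than the paper's route. The paper instead rescales columns so that a fixed row is identically zero, introduces ``lower minors'' (obtained by deleting that row together with two columns), and observes that each maximal minor equals the smallest lower minor it contains; this forces at least two maximal minors to share the minimal value. Both arguments are short, but yours avoids the auxiliary notion of lower minors.

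For the ``only if'' direction your idea coincides with the paper's, with one caveat: in this paper $\TT = \RR$, so $\infty$ is not available. The paper inserts ``any sufficiently large entry'' in position $j_0$; varying that entry produces infinitely many kernel vectors that are pairwise non-proportional, so uniqueness fails. Your argument goes through verbatim with a finite stand-in for $\infty$.

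The genuine gap is in the ``if'' direction. You correctly identify the crux as the reduction to $|T|=1$, and your analysis of that base case is sound: when $T=\{j_0\}$, the $\vy$-minimum in each row cannot be attained at $j_0$, so the two required minimizers lie in $S$, whence $\vx|_S$ lies in the kernel of $A^{(j_0)}$ and Theorem~\ref{str_square} gives a singular minor. However, neither of your suggested routes to $|T|=1$ goes through without substantial new input. Tropical combinations $\vy \oplus (c \odot \vx)$ do stay in the kernel, but a direct computation shows they never shrink $T$: for any $c>0$ one gets $z_j = x_j$ on $S$ and $z_j = \min(y_j, x_j+c) > x_j$ on every $j \in T$, so $T(\vz)=T$. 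Minimizing $|T|$ over counterexamples is no better: one can check that if all maximal minors are nonsingular, then for each $j_0 \in T$ there must exist a row whose $\vx$-minimum is attained exactly at $\{j_0, j_1\}$ with $j_1 \in T$ (else deleting column $j_0$ already yields a singular minor), and such rows obstruct any naive perturbation that would move a single index from $T$ to $S$. Turning these constraints into a contradiction requires a global combinatorial argument you have not supplied.

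The paper takes a completely different route for this direction. Given a kernel vector $\vx$, it rescales to form a nonnegative matrix $A_\vx$ whose zero pattern is recorded by a \emph{linkage hypergraph}; a rescaling lemma (Lemma~\ref{game}) then produces a \emph{connected} complementary linkage hypergraph, which is identified with a vertex of a weighted transportation polytope. An optimization argument (Lemmas~\ref{lem:vertofT} and \ref{lem:opt}) shows that two distinct optimal vertices force a singular minor, while if $A_\vx$ and $A_\vy$ yield the same linkage tree then $\vx$ and $\vy$ must be tropically proportional. Your $S$/$T$ decomposition does not appear in the paper at all; if it can be completed it would constitute a genuinely more elementary proof, but as written the hard direction is only a plan, not a proof.
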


In Section \ref{sec:trop-la}, we introduce \emph{tropical weighted matrices} which take into account the multiplicities defined in Section \ref{mult}. We will state and prove generalizations  of the two theorems above for tropical weighted matrices (Theorems \ref{square} and \ref{n-1xn}). Our method also gives a new, purely combinatorial proof of Theorem \ref{str_square} (which in \cite{richter-gebert_first_2005} uses a lift to Puiseux series). To prove our generalization of Theorem \ref{thm:str_n-1xn} we borrow techniques and terminology from \cite{richter-gebert_first_2005} and \cite{sturmfels_maximal_1993}, generalizing  and specializing to our situation. The restriction of our proof to the case of Theorem \ref{thm:str_n-1xn} gives a self-contained proof that fills some possibly omitted details in \cite{richter-gebert_first_2005} (see Remark \ref{rmk:gap}).

\subsection{Acknowledgments}
This work grew out of discussions with Aaron Bertram, Tyler Jarvis, Lance Miller, and Dylan Zwick. The author also thanks Bernd Sturmfels for email correspondence. The author was supported by NSF Research Training Grant DMS-1246989 during parts of the work on this paper.

\section{Tropical preliminaries} \label{trophyp}
We first quickly review some basic notions from polyhedral geometry. Recall that a \emph{polyhedron} in $\RR^n$ is the solution to a finite set of linear inequalities and equations. A \emph{face} of a polyhedron is a subset of the polyhedron obtained by changing some of the inequalites into equations. A \emph{vertex} of a polyhedron is a zero-dimensional face and an \emph{edge} is a one-dimensional face.  A \emph{polytope} is a compact polyhedron. A \emph{lattice polytope} is a polytope with all of its vertices at integer points. We call a $d$ dimensional polyhedron a $d$-polyhedron. A \emph{polyhedral complex} in $\RR^n$ is a collection of polyhedra $\PP$ satisfying
\begin{itemize}
\item For every face $\sigma$ of a polyhedron $P \in \PP$, we have $\sigma \in \PP$.
\item For $P,P' \in \PP$, we have that $P \cap P'$ is a face of both.
\end{itemize}
The \emph{support} of a polyhedral complex is the union of the polyhedra. A \emph{polyhedral decomposition} of a polyhedron $P$ is a polyhedral complex whose support is equal to $P$.

In tropical geometry we work over the min-plus semiring $\TT$. This ring as a set is the same as $\RR$, but with new operations $\oplus$, $\odot$ defined by
\begin{align*}
a \odot b &= a+b \\
a \oplus b &= \min(a,b).
\end{align*}
Fix a lattice polytope $\Delta$. A polynomial $f \in \TT[x_1, \dots, x_n]$ is a formal sum 
\[
    f = \bigoplus_{I=(i_1, \dots, i_n) \in \L(\Delta)} a_I \odot x_0^{\odot i_1} \odot \cdots \odot x_n^{\odot i_n}
\]
Such a polynomial defines a polyhedral decomposition $\TTT_f$ of $\TT^{n}$: for any $\BB \subset\L(\Delta)$, we get a polyhedron $P_{\BB}$ (possibly empty) of this decomposition containing all points that make the monomials in $\BB$ minimal, that is 
\[
 P_{\BB} = \left\{\vy \in \TT^{n}: a_J + \sum_{k=1}^n j_k y_k = \min_{I \in \L(\Delta)} \left\{a_I + \sum_{k=1}^n i_k y_k\right\} \mbox{ for all } J \in \BB\right\}.
\]
The tropical hypersurface $V(f)$ is the subcomplex of $\TTT_f$ with polyhedra $P_\BB$ with $|\BB| \ge 2$. Its support is the set of $\vy$ such that 
\begin{equation}
    \min_{I} \left\{a_I + \sum_{k=0}^n i_k y_k\right\} \label{eq:min}
\end{equation}
is achieved by more than one choice of $I$. By a slight abuse, we often call this support the tropical hypersurface.  When we need to mention the polytope $\Delta$, we will call $V(f)$ a $\Delta$-hypersurface. See Figure \ref{tropline} for a simple example. 

One can construct a polyhedral decomposition $\PP_f$ of $\Delta$ called the \emph{dual complex}. For each subset of lattice points $\BB \subset \mathbb Z(\Delta)$, the convex hull of $\BB$ is in $\PP_f$ if $P_{\BB}$ is nonempty.

Figure \ref{goodconic} shows the dual complex to a tropical plane curve. In our figures of the dual complex, we prefer to  invert the axes. This way, one can often superimpose the tropical curve on the dual complex so that every vertex lies in its corresponding 2-polyhedron, and every edge is orthogonal to its corresponding edge.

We remark that there is another (equivalent) description of the dual complex using the projection of the lower faces of the convex hull of the points $(i_1, \dots, i_n, a_I) \in \RR^{n+1}$. See \cite{richter-gebert_first_2005} for details.

In this paper, when we give a tropical hypersurface as $V(f)$, we will assume that $f$ is \emph{saturated}, that is, every monomial of $f$ achieves the minimum in \eqref{eq:min} at some point. Geometrically, this is no restriction, since for a nonsaturated polynomial, one can form its \emph{saturation} by decreasing the offending monomials just until the new polynomial is saturated. The saturation of a polynomial still gives the same hypersurface as the original.

\begin{figure} 
\centering
\begin{tikzpicture}
\draw [<->] (0,1) -- (0,0) node [above right] {$(-3,2)$} -- (1,0);
\draw [<-] (-1,-1) -- (0,0);
\node at (.7,.7) {$0$};
\node at (-.5, 0) {$x$};
\node at (0,-.5) {$y$};
\draw [fill] (0,0) circle [radius =1pt];
\end{tikzpicture}
\caption{The tropical line $3\odot x \oplus -2 \odot y \oplus 0$}
\label{tropline}
\end{figure}
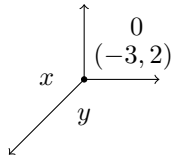

 \begin{definition}
 We say a polyhedral complex is \emph{lattice simplicial} if for every $d$, the $d$-polyhedra of the complex contain exactly $d+1$ lattice points. For this paper, we will call a hypersurface \emph{\smooth{}} if its dual complex is lattice simplicial. We remark that lattice simplicial implies simplicial, but in dimension greater than 2, it is weaker than unimodular. 
 \end{definition}

\section{Points on a tropical hypersurface} \label{pointson}
We wish to generalize Proposition \ref{prop:fe} by allowing the fixed points to lie in higher codimension polyhedra and allowing the higher multiplicity incidence conditions.  We also want to be able to consider non-\smooth{} hypersurfaces. Instead of looking at a subgraph, we now consider a weighted subcomplex.
\begin{definition} \label{def:mu}
Let $\Delta$ be a lattice polytope, with $\PP$ a lattice polyhedral decomposition. 

Let $|P|$ be the number of lattice points contained in a lattice polytope $P$.

A \emph{weighting} $\mu$ of $\PP$ is a map
\[
	\mu: \PP \rightarrow \mathbb Z
\]
satisfying
\[
	0 \le \mu(P) \le |P|-1
\]
for all polytopes $P \in \PP$. 

Define
\[
	|\mu| = \sum_{P \in \PP} \mu(P).
\]

Let $\PP^0$ be the vertices of $\PP$. In the lattice simplicial case, of course, all the lattice points $\L(\Delta)$ are vertices of $\PP$, but in general this is not the case. 
\end{definition}

Now we need to define the analog of the connected component of the subgraph in Figure \ref{disc} that provided a deformation.

\begin{definition} \label{def:rigid}
Let $L \subset \L(\Delta)$ be a nonempty subset of the lattice points of $\Delta$ such that $L$ does not contain $\PP^0$. We say that $L$ is \emph{deformable} (with respect to $\mu$) if for all $P \in \PP$ we have either
\[
    |P \cap L| = 0
\]
or
\[
    |P \cap L| \ge \mu(P)+1.
\]
We say $\mu$ is \emph{rigid} if no such $L$ is deformable.
\end{definition}

\begin{proposition} \label{hypermult}
Suppose $V(f)$ is a $\Delta$-hypersurface with dual complex $\PP$. Let $\vx_1, \dots, \vx_K$ be points on $V(f)$ and $m_1, \dots, m_K$ be positive integers such that $V(f)$ has multiplicity at least $m_i$ at $\vx_i$. Assume that no two points lie in the interior of the same polyhedron, and let $\mu$ be the weighting of $\PP$ defined by setting $\mu(P) = m_i$ if $\vx_i$ is in the interior of $P$, and $\mu(P) = 0$ otherwise.

Then the following are equivalent:
\begin{itemize}
	\item $V(f)$ is the unique $\Delta$-hypersurface with multiplicity at least $m_i$ at $\vx_i$ for each $i$.
	\item $\mu$ is rigid.
\end{itemize}
\end{proposition}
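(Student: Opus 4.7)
The plan is to prove the two directions of the equivalence separately. The direction ``$\mu$ not rigid implies $V(f)$ not uniquely determined'' admits a direct construction; the converse is more delicate, and I expect it to be the main obstacle.

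For the easy direction, given a deformable $L \subseteq \L(\Delta)$, I would define $g$ by setting $b_I = a_I - \epsilon$ for $I \in L$ and $b_I = a_I$ otherwise, where $\epsilon > 0$ is chosen smaller than every strict $f$-value gap $a_J + \vx_i \cdot J - f(\vx_i)$ with $J \notin P_i$ at the incidence points. At each $\vx_i$ in the interior of $P_i$ with $\mu(P_i) = m_i$, the two alternatives permitted by deformability yield the multiplicity bound directly: if $P_i \cap L = \emptyset$, the monomials of $P_i$ are unchanged and collectively retain the minimum of $g$ at $\vx_i$; if $|P_i \cap L| \ge m_i + 1$, the monomials of $P_i \cap L$ drop uniformly by $\epsilon$ and collectively attain the new minimum, the $\epsilon$-smallness preventing any monomial outside $P_i$ from interfering. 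To conclude $V(g) \ne V(f)$, I would pass to the saturation $g^{\mathrm{sat}}$: since $L$ does not contain $\PP^0$, there exists $I \in \PP^0 \setminus L$ which strictly minimizes $f$ on a nonempty open set and continues to do so in $g$ for small $\epsilon$, forcing $b_I^{\mathrm{sat}} = a_I$; if $g^{\mathrm{sat}}$ were a tropical scaling of $f$, the shift would be pinned to zero, contradicting $b_I^{\mathrm{sat}} \le a_I - \epsilon < a_I$ for any $I \in L \ne \emptyset$. A short check confirms that for this $\epsilon$ the saturation does not lower the minimum of $g$ at any $\vx_i$, so the multiplicity bounds transfer from $g$ to $g^{\mathrm{sat}}$.

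For the converse direction, suppose a saturated polynomial $g$ with $V(g) \ne V(f)$ realizes the same incidence data. After tropically scaling $g$ so that $\alpha_I := a_I - b_I$ satisfies $\min_I \alpha_I = 0$ with the minimum attained, I would attempt to exhibit a threshold $c$ for which $L_c := \{I : \alpha_I > c\}$ is deformable, nonempty, and misses some vertex of $\PP$. Expanding the minimum of $g$ at $\vx_i$ as $f(\vx_i) - \max_I (\alpha_I - \delta_I)$, where $\delta_I \ge 0$ is the strict $f$-gap at $\vx_i$ (zero on $P_i$, positive outside), the condition that at least $m_i + 1$ monomials attain this maximum constrains the distribution of $\alpha$-values on $P_i$ enough to produce a non-trivial admissible set of $c$ for each $i$ (in particular, $c < \min_{I \in P_i}\alpha_I$ forces $|P_i \cap L_c| = |P_i| \ge m_i + 1$ and $c \ge \max_{I \in P_i}\alpha_I$ forces $|P_i \cap L_c| = 0$). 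The main obstacle is reconciling these admissible sets across all the marked polytopes: a single $c$ must lie in the intersection while still leaving $L_c$ nonempty and missing a vertex of $\PP$. I expect to handle this either by a careful case analysis keyed to how the $P_i$ overlap in $\PP$, or by iteratively modifying an initial candidate $L$ by adding or removing lattice points to repair offending $P_i$'s one at a time.
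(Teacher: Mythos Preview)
Your first direction is correct and matches the paper's argument, with more explicit bookkeeping on saturation than the paper bothers to record.

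For the converse, your threshold framework $L_c = \{I : \alpha_I > c\}$ is exactly right, but you have not located the correct $c$, and the case analysis or iterative repair you propose is unnecessary. The paper's device is to consider the one-parameter family
\[
h_t \;=\; f \oplus (t \odot g),
\]
whose $I$-th coefficient is $\min(a_I, b_I + t)$. The crucial observation, which you are missing, is that $h_t$ satisfies the incidence conditions \emph{for every} $t$: at each $\vx_i$ the minimum of $h_t$ is $\min(f(\vx_i), g(\vx_i)+t)$, and whichever of $f$ or $t\odot g$ realizes this minimum contributes at least $m_i+1$ minimizing monomials. Since $h_t = f$ for $t \gg 0$ and $h_t = t\odot g$ for $t \ll 0$, one takes $t^* = \max_I \alpha_I$; for $t$ just below $t^*$, $h_t$ is precisely $f$ with the coefficients in $L := \{I : \alpha_I = t^*\}$ uniformly lowered, and the first-direction claim now forces $L$ to be deformable.

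In your language this says: the choice $c = t^* - \epsilon$ always lies in the intersection of your admissible sets. One can verify this directly: for each $P_i$, every $I \in P_i$ has $\alpha_I \le d_i := f(\vx_i) - g(\vx_i)$, while every $g$-minimizer $J$ at $\vx_i$ has $\alpha_J \ge d_i$, with equality iff $J \in P_i$. If $t^* > d_i$ then $P_i \cap L = \emptyset$; if $t^* = d_i$ then every $g$-minimizer at $\vx_i$ is forced into $P_i$ (else $\alpha_J > t^*$), so $P_i \cap L$ contains all $\ge m_i+1$ of them. That $L$ misses a vertex of $\PP$ then follows from saturation of $g$: if $L \supseteq \PP^0$, any non-vertex $I_0 \notin L$ would have $b_{I_0}$ strictly above a convex combination of the $b_J$ with $J \in \PP^0$, contradicting saturation, so $L = \L(\Delta)$ and $g$ is a scaling of $f$.

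So there is no genuine obstacle where you anticipate one; the single threshold at the top of the $\alpha$-spectrum already works, and the tropical-sum interpolation is the clean way to see why.
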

\begin{proof}
First we claim that decreasing the coefficients corresponding to a subset $L \subset \ZZ(\Delta)$ by a small amount produces a tropical hypersurface that still satisfies the conditions imposed by the $\vx_i$ and $m_i$ if and only if $L$ is deformable. Indeed, any point $\vx_k$ corresponds to some $P \in \PP$ whose lattice points correspond to the monomials minimized at $\vx_k$. We see then that $\vx_k$ remains on the hypersurface with multiplicity $m_k$ if and only if there are either 0 or at least $m_k+1=\mu(P)+1$ monomials of $P$  being decreased, so the claim is proved. 

If $\mu$ is not rigid, there is a deformable $L$ not containing $\PP^0$. Then the deformation corresponding to $L$ is an actual deformation. (Notice that $\PP^0$ corresponds to monomials which are uniquely minimizing on some top-dimensional polyhedron of $\TTT_f$. If all of these coefficients were decreased, the saturation of the new polynomial would be equal to a rescaling of the original $f$.) Hence $V(f)$ is not uniquely determined.

Now suppose $g$ is another polynomial such that $V(g)$ satisfies the conditions imposed by the $\vx_i$ and is distinct from $V(f)$ posed by the points. Then, for any $t \in \TT$, $f \oplus (t \odot g)$ also satisfies the conditions imposed by the points. For $t \gg 0$, $f \oplus (t \odot g) = f$, while for $t \ll 0$, $f \oplus (t \odot g) = t \odot g$. Hence, for some value of $t$, decreasing $t$ gives a deformation of the type above, which by the first claim in this proof gives a deformable $L$. 
\end{proof}

In specific examples, checking that $\mu$ is rigid may not be as easy as checking whether a graph is connected, as in the case of \smooth{} plane curves (see, for example Figure \ref{fig:looks-con}).  However, if the hypersurface is \smooth{} and the number of points is minimal, Theorem \ref{thm:con} will tell us that the situation is almost as nice as for plane curves.  We first introduce some language.

\begin{definition}
We say that $P \in \PP$ is \emph{full} if $\mu(P)$ is as large as allowed, that is $\mu(P) = |P|-1$. We say $P$ is \emph{deficient} if $0 < \mu(P)<|P|-1$.

We say $\mu$ is \emph{full} if for every $P$, either $\mu(P) = |P|-1$ or $\mu(P) = 0$, or equivalently, no $P$ is deficient.

For any $\mu$, define
\[
	\supp(\mu) = \bigcup_{P \mbox{ full}} P
\]
Notice that every vertex of $\PP$ is full and thus in $\supp(\mu)$.
\end{definition}

\begin{theorem}
\label{thm:con}
Suppose that $\PP$ is a lattice simplicial decomposition of $\Delta$ with weighting $\mu$ (see Definition \ref{def:mu}) and $|\mu| = |\Delta|-1$. Then the following are equivalent:
\begin{itemize}
\item $\mu$ is rigid (see Definition \ref{def:rigid}).
\item $\supp(\mu)$ is connected.
\item $\supp(\mu)$ is connected and $\mu$ is full.
\end{itemize}
\end{theorem}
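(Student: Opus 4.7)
My plan is to prove the chain of implications (3)$\Rightarrow$(2), (3)$\Rightarrow$(1), (2)$\Rightarrow$(3), and (1)$\Rightarrow$(2); the first is immediate. The key auxiliary object is the graph $G$ on the vertex set $\L(\Delta)$ whose edges are pairs of lattice points sharing a common full polytope $P$ with $\mu(P)\geq 1$. Since the lattice simplicial hypothesis forces every lattice point to be a vertex of $\PP$ and hence a trivially full $0$-polytope, connectedness of $\supp(\mu)$ is equivalent to connectedness of $G$ on $\L(\Delta)$: any lattice point isolated in $G$ would be an isolated-vertex connected component of $\supp(\mu)$.

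For (3)$\Rightarrow$(1), I observe that when $\mu$ is full, the deformability condition $|L\cap P|\in\{0\}\cup[\mu(P)+1,|P|]$ at any full $P$ with $\mu(P)\geq 1$ collapses to $|L\cap P|\in\{0,|P|\}$. Hence any deformable $L$ is a union of connected components of $G$, so connectedness forces $L$ to be empty or all of $\L(\Delta)$, contradicting nontriviality. For (2)$\Rightarrow$(3), I count edges of a spanning tree $T$ of $G$: since $G$ is connected on $|\Delta|$ vertices, $|T|=|\Delta|-1$, and each tree edge can be assigned to a full $P$ with $\mu(P)\geq 1$ containing it. Because $P$'s lattice points form a simplex and $T$ is acyclic, no $P$ absorbs more than $|P|-1=\mu(P)$ tree edges, so
\[
|\Delta|-1 \;=\; |T| \;\leq\; \sum_{P\text{ full},\,\mu(P)\geq 1}\mu(P) \;\leq\; |\mu| \;=\; |\Delta|-1,
\]
forcing equality throughout. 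The last equality then forces the total $\mu$-weight on deficient polytopes to vanish, so $\mu$ is full.

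The real content is (1)$\Rightarrow$(2), which I prove by contrapositive. Assume $\supp(\mu)$ has components $C_1,\dots,C_K$ with $K\geq 2$; I must construct a deformable $L$ with $\emptyset\neq L\neq\L(\Delta)$. Two sub-cases are easy: if $\mu$ is full, then $L=C_1\cap\L(\Delta)$ is deformable because every full $P$ with $\mu(P)\geq 1$ lies in a unique component; and if some lattice point $v$ lies in no full $P$ with $\mu(P)\geq 1$, then $L=\L(\Delta)\setminus\{v\}$ is deformable because every $P\ni v$ satisfies $\mu(P)\leq|P|-2$, giving $|L\cap P|=|P|-1\geq\mu(P)+1$. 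The main obstacle is the residual case where $\mu$ is not full yet every lattice point lies in some full positive-weight polytope. Refining the spanning-tree count to a spanning-forest argument on the $K$ components yields the crucial bound $|\mu|_{\mathrm{def}}\leq K-1$, where $|\mu|_{\mathrm{def}}=\sum_{P\text{ deficient}}\mu(P)$. I would use this slack to produce a nonempty proper $S\subseteq\{1,\dots,K\}$ such that $L=\bigcup_{i\in S}(C_i\cap\L(\Delta))$ is deformable; full positive-weight polytopes are automatically fine, so the task is to keep $\sum_{i\in S}|C_i\cap P\cap\L(\Delta)|$ out of the forbidden interval $[1,\mu(P)]$ for every deficient $P$. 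Starting from the singleton candidates $S=\{i\}$: if each one fails, some deficient $P_i$ satisfies $|C_i\cap P_i\cap\L(\Delta)|\in[1,\mu(P_i)]$, so $\sum_i|C_i\cap P_i\cap\L(\Delta)|\leq\sum_i\mu(P_i)$. When the witnesses $P_i$ are pairwise distinct this already contradicts the chain $K\leq\sum_i\mu(P_i)\leq|\mu|_{\mathrm{def}}\leq K-1$; when several indices share a common witness, I would handle the contradiction by enlarging $S$ to absorb those coincident components and iterating, using the same weight-counting bound at each step.
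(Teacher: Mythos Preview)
Your implications $(3)\Rightarrow(1)$ and $(2)\Rightarrow(3)$ are correct, and the spanning-tree count for $(2)\Rightarrow(3)$ is a genuinely different and pleasant alternative to the paper's route. The paper never isolates ``connected $\Rightarrow$ full'' as a separate step; instead it proves ``connected $\Rightarrow$ rigid'' by a path argument, then ``not full $\Rightarrow$ not rigid'' via auxiliary machinery, and finally ``rigid (hence full) $\Rightarrow$ connected.'' Your edge-count
\[
|\Delta|-1=|T|\le \sum_{P\text{ full},\ \mu(P)\ge 1}\mu(P)\le|\mu|=|\Delta|-1
\]
makes transparent why the hypothesis $|\mu|=|\Delta|-1$ forces fullness once $\supp(\mu)$ is connected.

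However, your $(1)\Rightarrow(2)$ has a real gap in the residual sub-case. You explicitly leave the argument at ``I would handle the contradiction by enlarging $S$ to absorb those coincident components and iterating''---this is not a proof. The difficulty is genuine: after merging components that share a deficient witness $P$, the enlarged $L_S$ may now land in the forbidden interval $[1,\mu(Q)]$ for some \emph{other} deficient $Q$, and you give no reason the process terminates with a deformable set. Your inequality $|\mu|_{\mathrm{def}}\le K-1$ is correct, but the singleton-witness count $K\le\sum_i\mu(P_i)\le|\mu|_{\mathrm{def}}$ breaks exactly when witnesses coincide, and that is where all the work lies.

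The paper handles this step by a different mechanism that does \emph{not} restrict the candidate $L$ to unions of components. It introduces $\hat\mu(L)=\sum_{P:\,|P\cap L|\ge\mu(P)+1}\mu(P)$ and proves a lemma: if some $L$ has $\hat\mu(L)\ge|L|$, then $\mu$ is not rigid, because one can repeatedly add \emph{individual vertices} to $L$---each time raising $\hat\mu$ by at least as much as $|L|$---until the result is deformable, and the bound $\hat\mu\le|\mu|=|\Delta|-1$ guarantees the final set is proper. To trigger the lemma when $\mu$ is not full, the paper finds a deficient $P'$ and a component $C_1$ with $|P'\cap C_1|\ge 2$ (if no such pair exists, the complement of any single component is already deformable), takes a union $\bar C$ of $s\le\mu(P')$ components with $|P'\cap\bar C|>\mu(P')$, and checks $\hat\mu(\L(\bar C))\ge|\L(\bar C)|-s+\mu(P')\ge|\L(\bar C)|$ via a simplicial-complex estimate. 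The deformable set ultimately produced need not be a union of components, which is why your framework, as written, does not close.
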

See Figure \ref{fig:p1xp1} for an example of a curve satisfying the conditions of the theorem.

\begin{figure}
\begin{center}
\begin{tikzpicture}
\path [fill=lightgray] \cxx --\cyy --\czz -- \cxx;
\otherconic
\markpoint{-1,.5}
\markpoint{0,.5}
\markpoint{.5,0}
\markpoint{.5,-1}
\markpoint{0,1}
\draw [very thick] \cxz -- \czz;
\draw [dashed, <->] (-2, -.8) -- (-1,.2) -- (0,.2) -- (.2,0) -- (.2,-1) -- (-.8,-2);
\draw [dashed] (-1,.2) -- (-1,.5);
\draw [dashed] (0,.2) -- (0,.5);
\draw [dashed] (.2,0) -- (.5,0);
\draw [dashed] (.2,-1) -- (.5,-1);
\newcommand{\Lrad}{.1}
\draw [thick] \cxx circle [radius = \Lrad];
\draw [thick] \cxz circle [radius = \Lrad];
\draw [thick] \czz circle [radius = \Lrad];
\draw [thick] \cyz circle [radius = \Lrad];
\draw [thick] \cyy circle [radius = \Lrad];

\cxm{1}
\cym{1}
\coym{1}
\coxm{1}
\end{tikzpicture}
\end{center}
\caption[The marked dual graph looks connected, but is not full or rigid]{The marked dual graph looks connected, but is not full or rigid. The circled vertices give an $L$ that is deformable.}
\label{fig:looks-con}
\end{figure}
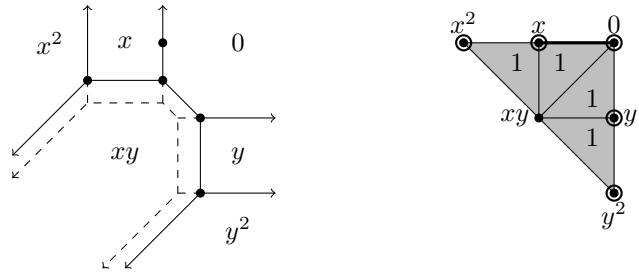

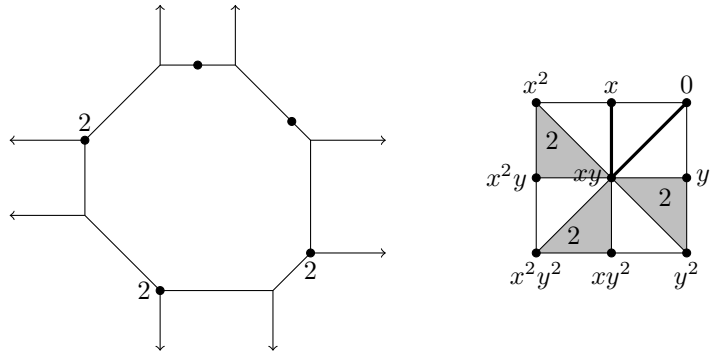
\begin{figure}
\begin{center}
\begin{tikzpicture}
\path [fill=lightgray] \cxx -- \cxxy -- \cxy -- \cxx;
\path [fill=lightgray] \cxxyy -- \cxyy -- \cxy -- \cxxyy;
\path [fill=lightgray] \cxy -- \cyz -- \cyy -- \cxy;
\draw [very thick] \cxz -- \cxy -- \czz;
\ppdualcomplex
\draw \cxx -- \czz -- \cyy -- \cxxyy -- \cxx;
\draw \cxx -- \cyy;
\draw \cxz -- \cxyy;
\draw \cxxy -- \cyz;
\draw \cxxyy -- \czz;
\node [right] at (\ds, \dcgrid/2) {$2$};
\cym{2}
\node [above] at (\ds + \dcgrid/2, -\dcgrid) {$2$};
\draw (-1,1.5) -- (0,1.5) -- (1,.5) -- (1,-1) -- (.5,-1.5) -- (-1,-1.5) -- (-2,-.5) -- (-2,.5) -- (-1,1.5);
\draw [<-] (-3,-.5) -- (-2,-.5);
\draw [<-] (-3, .5) -- (-2, .5);
\draw [<-] (-1,2.3) -- (-1,1.5);
\draw [<-] (0,2.3) -- (0,1.5);
\draw [<-] (-1,-2.3) -- (-1,-1.5);
\draw [<-] (.5, -2.3) -- (.5, -1.5);
\draw [<-] (2,-1) -- (1,-1);
\draw [<-] (2,.5) -- (1,.5);
\node [above] at (-2,.5) {$2$};
\node [left] at (-1,-1.5) {$2$};
\node [below] at (1,-1) {$2$};
\markpoint{-2,.5}
\markpoint{-1,-1.5}
\markpoint{1,-1}
\markpoint{.75,.75}
\markpoint{-.5, 1.5}
\end{tikzpicture}
\end{center}
\caption[A ``smooth (2,2) curve in $\mathbb P^1 \times \mathbb P^1$" with a rigid weighting with multiplicity]{A ``smooth (2,2) curve in $\mathbb P^1 \times \mathbb P^1$" with a rigid weighting with multiplicity.}
\label{fig:p1xp1}
\end{figure}

For the proof, we will need the following.
\begin{definition}
For $L \subset \ZZ(\Delta)$, define
\[
	\used(L) = \sum_{P: |P \cap L | \ge \mu(P)+1} \mu(P).
\]
\end{definition}
Clearly $\used(L) \le |\mu|$. We think of $\used(L)$ as measuring how much of $|\mu|$ has been already ``taken care of" by $L$. The motivation for this definition is the following lemma.

\begin{lemma}
\label{lem:used}
Assume $\PP$ is lattice simplicial and $|\mu|=|\Delta|-1$. Suppose there exists $L$ such that $\used(L) \ge |L|$. Then $\mu$ is not rigid. 
\end{lemma}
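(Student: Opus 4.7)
My plan is to construct a deformable $L' \subsetneq \L(\Delta)$ directly from the given (nonempty) $L$, which by Definition~\ref{def:rigid} shows $\mu$ is not rigid. I first observe that the hypothesis $\used(L) \ge |L|$ together with the bound $\used(L) \le |\mu| = |\Delta|-1$ forces $L$ to be a proper subset of $\L(\Delta) = \PP^0$ (using lattice simpliciality). If $L$ itself is deformable, I take $L' := L$ and we are done.

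Otherwise I augment $L$ iteratively, maintaining the invariant $\used(L_i) \ge |L_i|$. If $L_i$ is not yet deformable, there is a partially-met polytope $P$---that is, $0 < |P \cap L_i| \le \mu(P)$---and I set $L_{i+1} := L_i \cup S$, where $S$ is any subset of $(P \cap \L(\Delta)) \setminus L_i$ of size exactly $k := \mu(P) - |P \cap L_i| + 1$. Such an $S$ exists because $|P| - |P \cap L_i| \ge \mu(P)+1 - |P \cap L_i| = k$. By construction $|P \cap L_{i+1}| = \mu(P)+1$, so $P$ newly contributes $\mu(P)$ to $\used$, while $|L|$ grows by $k \le \mu(P)$ (using $|P \cap L_i| \ge 1$). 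Hence $\used(L_{i+1}) - |L_{i+1}| \ge \used(L_i) - |L_i| \ge 0$ and the invariant is preserved.

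Since $L_i$ grows strictly in the finite poset of subsets of $\L(\Delta)$, the iteration terminates at some $L_n$. It cannot reach $\L(\Delta)$, because $\used(\L(\Delta)) = |\mu| = |\Delta|-1 < |\Delta|$ would violate the invariant. Hence $L_n \subsetneq \L(\Delta) = \PP^0$ and the iteration stopped because $L_n$ has no partially-met polytope, i.e., $L_n$ is deformable; I take $L' := L_n$.

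The step needing the most care is the bookkeeping $\used(L_{i+1}) - \used(L_i) \ge \mu(P)$. Enlarging $L$ can only increase $|Q \cap L|$ for every $Q \in \PP$, so no polytope drops out of the sum defining $\used$; $P$ crosses the activation threshold by design, contributing exactly $\mu(P)$; any further polytopes newly activated only add to $\used$. Making this accounting precise, together with confirming that the selection rule for $P$ and $S$ is well-posed at every step (in particular that $k \ge 1$ whenever $|P \cap L_i| \le \mu(P)$, and that $k$ does not exceed the number of available vertices in $P$), is where the main technical care lies.
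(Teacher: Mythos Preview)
Your proof is correct and follows essentially the same approach as the paper's: iteratively enlarge $L$ through ``partially-met'' polytopes while maintaining the invariant $\used(L_i)\ge |L_i|$, then use the bound $\used\le|\mu|=|\Delta|-1$ to conclude the terminal set is a proper subset and hence deformable. Your version is slightly more explicit than the paper's about the size of the added set $S$, the monotonicity of $\used$ under enlargement, and the use of lattice simpliciality to identify $\PP^0$ with $\L(\Delta)$, but the underlying argument is the same.
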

\begin{proof}
If $L_0 := L$ is deformable, we are done. Otherwise, there is some polytope $P_0$ with $1 \le |P_0 \cap L_0| \le \mu(P_0)$. Add some vertices to $L_0$ to form $L_1$ so that $|P_0 \cap L_1| > \mu(P_0)$. Notice that $\used(L_1)-\used(L_0) \ge \mu(P_0)$ and $|L_1|-|L_0| \le \mu(P_0)$. It follows from these two inequalities that $\used(L_1) \ge |L_1|$. Continue this process. At each step we have $\used(L_k) \ge |L_k|$. Since $\used$ is bounded above, this process must terminate, that is, eventually one obtains a deformable $L_k$. As $|L_k| \le \used(L_k) \le |\mu| = |\Delta|-1$, this violates rigidity.
\end{proof}

We next isolate a computation to be used in the proof.
\begin{lemma}
\label{lem:new-span}
For a simplicial complex $\mathcal K$
\[
    \sum_{P \mbox{ maximal}} \dim P \ge |\mathcal K^0| - h^0(\mathcal K).
\]
Here maximal simplexes are those which are not contained in a simplex of higher dimension, $|\mathcal K^0|$ is the number of vertices of $\mathcal K$, and $h^0(\mathcal K)$ is the number of connected components.
\end{lemma}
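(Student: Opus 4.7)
The plan is to reduce the inequality to a standard graph-theoretic fact by constructing an auxiliary graph on $\mathcal K^0$ whose edge count is exactly $\sum_{P \text{ maximal}} \dim P$ and whose connected components coincide with those of $\mathcal K$.

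First, for each maximal simplex $P$ of dimension $d$, I would fix a spanning tree $T_P$ of the $1$-skeleton of $P$. Such a tree exists because the $1$-skeleton of a simplex is complete, and it has exactly $d$ edges on its $d+1$ vertices. Let $G$ be the multigraph on vertex set $\mathcal K^0$ whose edge multiset is the disjoint union $\bigsqcup_{P \text{ maximal}} E(T_P)$. By construction, the number of edges of $G$ equals $\sum_{P \text{ maximal}} \dim P$.

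Next, I would verify that the connected components of $G$ coincide with the connected components of $\mathcal K$. Since every edge of $G$ is an edge of $\mathcal K$, components of $G$ refine components of $\mathcal K$. For the reverse direction, two vertices of $\mathcal K$ lying in the same component are joined by an edge-path in the $1$-skeleton of $\mathcal K$; each edge of this path is contained in some maximal simplex $P$, and the spanning tree $T_P$ connects the two endpoints of that edge inside $G$. Concatenating these paths shows that the two vertices lie in the same component of $G$. Hence $G$ has exactly $h^0(\mathcal K)$ connected components on $|\mathcal K^0|$ vertices.

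Finally, any (multi)graph on $n$ vertices with $c$ connected components has at least $n-c$ edges, since the underlying simple graph must contain a spanning forest of that size. Applying this to $G$ yields
\[
    \sum_{P \text{ maximal}} \dim P \;=\; |E(G)| \;\ge\; |\mathcal K^0| - h^0(\mathcal K),
\]
which is the desired inequality. No step is really an obstacle; the only point that requires care is the claim that components of $G$ match those of $\mathcal K$, and this follows directly from the observation that spanning trees of maximal simplices suffice to traverse any $1$-skeleton edge.
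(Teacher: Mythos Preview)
Your proof is correct and uses a genuinely different idea from the paper. The paper reduces to the connected case and then argues by induction, attaching one maximal $d$-simplex at a time: if the new cell shares $k$ vertices with the existing complex, the vertex count rises by $d+1-k$, while the left-hand side rises by at least $d-(k-1)$ because at worst a single $(k-1)$-face loses maximality. Your argument instead packages everything into a single auxiliary multigraph $G$ whose edge count is exactly the left-hand side and whose component structure matches that of $\mathcal K$, and then reduces the inequality to the elementary spanning-forest bound $|E(G)| \ge |V(G)| - h^0(G)$. Your approach is cleaner in that it avoids all bookkeeping about how maximality changes as cells are attached, and it makes the role of connectivity transparent through the observation that the spanning trees $T_P$ already suffice to traverse the $1$-skeleton. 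The paper's inductive build-up is more hands-on but requires some care about the order of attachment and about which previously maximal faces are absorbed at each step.
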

\begin{proof}
Since both sides are additive under disjoint union, it is sufficient to check this for a connected $\mathcal K$. It is trivially true for a complex consisting of a single point. If one attaches a $d$ cell to a complex with the new cell sharing $k$ points with the original complex, then $|\mathcal K_0|$ increases by $d+1-k$. On the other hand, the left hand side increases by at least $d - (k-1)$, since at worst a $(k-1)$-cell is no longer maximal. 
\end{proof}

\begin{corollary} \label{unioncomp}
Let $\mu$ be a weighting of $\mathcal P$. Suppose $L$ is the set of lattice points of some union $\mathcal K$ of $s$ components of $\supp(\mu)$ and suppose that there is a deficient $P' \in \mathcal P$ with $|P' \cap L| \ge \mu(P')+1$. Then
\[
	\used(L) \ge |L| - s + \mu(P').
\] 
\end{corollary}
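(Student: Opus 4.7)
The plan is to view $\mathcal K$ as a simplicial subcomplex of $\PP$ (namely, the collection of all $P \in \PP$ with $P \subseteq \mathcal K$) and apply Lemma \ref{lem:new-span} to it. The first structural point to establish is that every maximal simplex of $\mathcal K$ is full: any $P \in \PP$ contained in $\supp(\mu)$ must be a face of some full polytope (since the relative interior of $P$ meets the relative interior of exactly one polytope of $\PP$, which must itself be full for $P$ to lie in the support), so a maximal $P$ in $\mathcal K$ has to coincide with that containing full polytope. Combined with the lattice simplicial hypothesis, this gives $\mu(P) = |P|-1 = \dim P$ for every maximal $P$ of $\mathcal K$.

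Next, the combinatorial data in Lemma \ref{lem:new-span} matches the quantities in the statement: because $\PP$ is lattice simplicial, every lattice point inside a simplex is a vertex of it, so the vertex set $\mathcal K^0$ coincides with $L$, and $h^0(\mathcal K) = s$ by hypothesis. Lemma \ref{lem:new-span} then yields
\[
    \sum_{P \text{ maximal in } \mathcal K} \mu(P) \;=\; \sum_{P \text{ maximal in } \mathcal K} \dim P \;\ge\; |L| - s.
\]
Each such maximal $P$ satisfies $|P \cap L| = |P| \ge \mu(P)+1$ and therefore contributes $\mu(P)$ to $\used(L)$. The deficient polytope $P'$ contributes $\mu(P')$ to $\used(L)$ by hypothesis. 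Because $P'$ is deficient while every maximal simplex of $\mathcal K$ is full, $P'$ is distinct from each of those polytopes, so the two sets of contributions do not overlap; adding them gives $\used(L) \ge |L| - s + \mu(P')$.

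The main point requiring care is the structural claim that maximal simplices of the subcomplex $\mathcal K$ must be full, since without this one cannot convert $\dim P$ into $\mu(P)$ in the lower bound from Lemma \ref{lem:new-span}. Once that is in place, the rest is bookkeeping to confirm that $P'$ lies outside the set of maximal full simplices of $\mathcal K$ and hence contributes to $\used(L)$ independently.
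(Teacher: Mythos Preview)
Your argument is correct and follows the same route as the paper's proof: bound $\used(L)$ below by $\mu(P')$ plus $\sum_{P\subset\mathcal K\text{ maximal}}\mu(P)$, observe that each maximal $P$ in $\mathcal K$ is full so that $\mu(P)=\dim P$, and then invoke Lemma~\ref{lem:new-span} with $|\mathcal K^0|=|L|$ and $h^0(\mathcal K)=s$. You supply details the paper leaves implicit, in particular the justification that maximal cells of $\mathcal K$ are full and that the deficient $P'$ is therefore distinct from them.
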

\begin{proof}
We have 
\[
\used(L) \ge \mu(P') + \sum_{P \subset \mathcal K, \mbox{ maximal}} \mu(P).
\]
Notice that for $P$ in the sum $\mu(P) = \dim P$ and then apply the Lemma.
\end{proof}

\begin{proof}[Proof of Theorem \ref{thm:con}]
Suppose $\supp(\mu)$ is connected. We will show $\mu$ is rigid. Let $L$ be given. Pick some $v_1 \in L$ and $v_2 \notin L$. There must be a sequence $P_0, \dots, P_k$ with $P_i$ full and $P_i \cap P_{i+1} \neq \emptyset$ so that $v_1 \in P_0$ and $v_2 \in P_k$. Hence there must be some full $P_i$ containing some vertex of $L$ and some vertex not in $L$, and this will verify that $L$ is not deformable.

Next, we show that if $\mu$ is not full, then $\mu$ is not rigid. Suppose first that there is some deficient polytope $P$ and some component $C_1$ of $\supp(\mu)$ such that $|P \cap C_1| \ge 2$. Pick additional components $C_i$, $i = 2, \dots, s$ intersecting $P$ until $\bar C:= \cup_{i=1}^s C_i$ has $|P \cap \bar C| > \mu(P)$. Note that $s \le \mu(P)$. Take $L = \ZZ(\bar C)$. Using Corollary \ref{unioncomp}, we get $\used(L) \ge |L| - s + \mu(P) \ge |L|$. So by Lemma \ref{lem:used} $\mu$ cannot be rigid.

Hence we may suppose that every deficient polytope $P$ meets every component of $\supp(\mu)$ in at most one lattice point. Pick any component $C$ of $\supp(\mu)$, and let $L = \{v: v \notin C\}$. This will be deformable: every full polytope will either be in $C$ and have none of its lattice points in $L$ or not be in $C$ and have all of its lattice points in $L$, while every deficient polytope $P$ will have at most one of its lattice points not in $L$ (it can have at most one lattice point in $C$ by assumption). Being deficient says that $P$ has $\mu(P) < |P| - 1$, so $|P \cap L| \ge |P| - 1 > \mu(P)$ as desired. We have shown that not full implies not rigid.

It remains to show that when $\mu$ is rigid (and hence also full), $\supp(\mu)$ is connected. But if $\supp(\mu)$ is disconnected, one sees that the lattice points of any component will form a suitable deformable $L$ .
\end{proof}

\begin{remark}
Both the lattice simplicial hypothesis and the hypothesis on $|\mu|$ are necessary in Theorem \ref{thm:con}, as shown in Figures \ref{fig:sing} and \ref{fig:count-hyp-nec}, respectively.
\end{remark}

We have observed that points in $\TT^n$ need not in general impose independent conditions on $\Delta$-hypersurfaces. However, a single point with multiplicity $|\Delta|-1$ always determines a unique hypersurface (see Figure \ref{fig:double-line} for an example). In fact, somewhat amusingly, putting points on top of each other always imposes independent conditions. More precisely:
\begin{proposition}
The set of coefficients of $\Delta$-hypersurfaces passing through a point $p$ with multiplicity $m$ is the support of a pure codimension $m$ polyhedral complex in $\TT^{|\Delta|}$.
\end{proposition}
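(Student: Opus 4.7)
The plan is to translate the incidence condition into a statement purely about coordinates on $\TT^N$, then write down the polyhedral complex explicitly. Parameterize $\Delta$-hypersurfaces by their coefficient vectors $(a_I)_{I \in \L(\Delta)} \in \TT^{|\Delta|}$. The condition that $V(f)$ passes through a fixed $p = (p_1,\dots,p_n) \in \TT^n$ with multiplicity at least $m$ says exactly that $\min_I\{a_I + \sum_k i_k p_k\}$ is achieved by at least $m+1$ indices. Since the change of coordinates $c_I := a_I + \sum_k i_k p_k$ is a translation of $\RR^{|\Delta|}$, it suffices to show that the locus $Z_m \subseteq \TT^N$ (with $N := |\Delta|$) where $\min_I c_I$ is achieved at least $m+1$ times supports a pure codimension $m$ polyhedral complex.

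For the second step, I would exhibit the complex by direct construction. For each $T \subseteq \L(\Delta)$ with $|T| \ge m+1$, set
\[
P_T = \{c \in \TT^N : c_I = c_J \text{ for all } I, J \in T, \text{ and } c_I \le c_J \text{ for all } I \in T,\, J \notin T\}.
\]
The $|T|-1$ independent equalities define an affine subspace of dimension $N-|T|+1$, and the inequalities cut out a full-dimensional polyhedron inside it. Clearly $Z_m = \bigcup_{|T| = m+1} P_T$, since $c$ lies in some such $P_T$ exactly when the set of minimizing indices of $c$ has size at least $m+1$; the top-dimensional pieces then have codimension exactly $m$.

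Finally, I would verify that $\mathcal{Q} := \{P_T : |T| \ge m+1\}$ is a polyhedral complex pure of codimension $m$. A face of $P_T$ (as a polyhedron) is obtained by promoting some of the inequalities $c_I \le c_J$ to equalities, which amounts to enlarging $T$; so every face of $P_T$ has the form $P_{T'}$ with $T' \supseteq T$, still in $\mathcal{Q}$. For the intersection axiom, given $c \in P_S \cap P_{S'}$ with common values $v$ on $S$ and $v'$ on $S'$, if neither set contains the other, pick $I \in S' \setminus S$ and $J \in S \setminus S'$; the defining inequalities of $P_S$ and $P_{S'}$ give $v \le v'$ and $v' \le v$ respectively, so $v = v'$ and $c \in P_{S \cup S'}$ (the nested case is immediate). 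Conversely $P_{S \cup S'} \subseteq P_S \cap P_{S'}$, so the intersection is the common face $P_{S \cup S'}$. Since every $P_T$ with $|T| > m+1$ is a face of $P_S$ for any $(m+1)$-subset $S \subseteq T$, the complex is pure of codimension $m$. I anticipate no real obstacle — the whole argument is elementary polyhedral bookkeeping, with only the minor subtlety of the nested-sets case in the intersection check.
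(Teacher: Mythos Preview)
Your argument is correct and follows essentially the same idea as the paper's proof: both identify the cells $P_T$ indexed by $(m{+}1)$-element sets of minimizing monomials and observe that the remaining coefficients can be freely increased, yielding codimension-$m$ pieces. The paper's proof is a brief sketch of this, while you have carried out the polyhedral bookkeeping in full.
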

\begin{proof}
The complex has a one-dimensional lineality space obtained by simultaneously scaling all the coefficients. Given any hypersurface meeting the point with multiplicity $m$, there are at least $m+1$ monomials which are minimal at $p$. Pick any $m+1$ of these, and now the coefficients of any other monomials can be increased.
\end{proof}

\begin{figure}
\begin{center}
\begin{tikzpicture}
\draw [<->] (-3.4,-2) -- (-.7,.7) -- (-.7, 2);
\draw [->] (-.7,.7) -- (.7, -.7) -- (2, -.7);
\draw [->] (.7,-.7) -- (-.6,-2);
\markpoint{0,0}
\markpoint{.7,-.7}
\markpoint{-.7,.7}
\markpoint{.2,-1.2}
\markpoint{-1.2,.2}
\node at (1,1) {$0$};
\node at (-1,-1) {$xy$};
\node at (-1.5,.7) {$x^2$};
\node at (1,-1.5) {$y^2$};
\path [fill=lightgray] (\ds, \dcgrid) -- (\ds + 2*\dcgrid, \dcgrid) -- (\ds + 2*\dcgrid, -\dcgrid) -- (\ds, \dcgrid);
\conicdualcomplex
\draw [very thick] (\ds + \dcgrid, 0) -- (\ds + 2*\dcgrid, \dcgrid);
\draw [very thick] (\ds, \dcgrid) -- (\ds + 2*\dcgrid, -\dcgrid);
\node at (\ds + \dcgrid, \dcgrid/1.7) {$1$};
\node at (\ds + 3*\dcgrid/1.8, 0) {$1$};
\end{tikzpicture}
\end{center}
\caption{A non-\smooth{} conic with a rigid weighting.}
\label{fig:sing}
\end{figure}
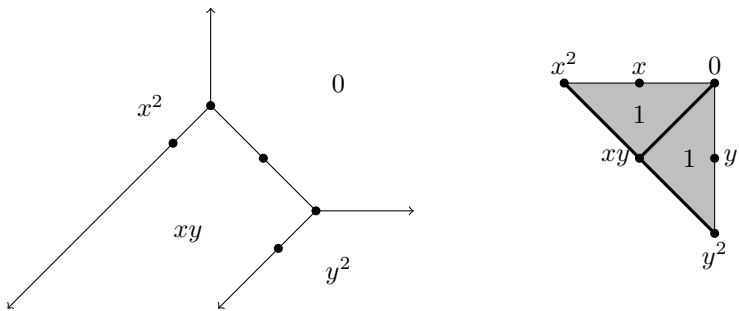

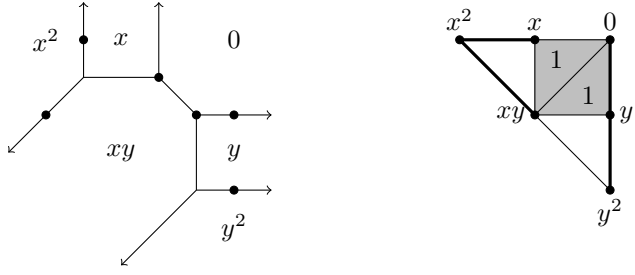
\begin{figure}
\begin{center}
\begin{tikzpicture}
\path [fill=lightgray] \czz -- \cxz -- \cxy -- \cyz -- \czz;
\otherconic
\markpoint{0,.5}
\markpoint{.5,0}
\markpoint{-1,1}
\markpoint{-1.5, 0}
\markpoint{1,0}
\markpoint{1,-1}
\draw [very thick] \cxz -- \cxx -- \cxy;
\draw [very thick] \czz -- \cyz -- \cyy;
\coym{1}
\coxm{1}
\end{tikzpicture}
\end{center}
\caption[This weighting is not full but is rigid]{This weighting is not full but is rigid, showing that the hypothesis $|\mu| = |\Delta|-1$ is necessary in Theorem \ref{thm:con}.}
\label{fig:count-hyp-nec}
\end{figure}

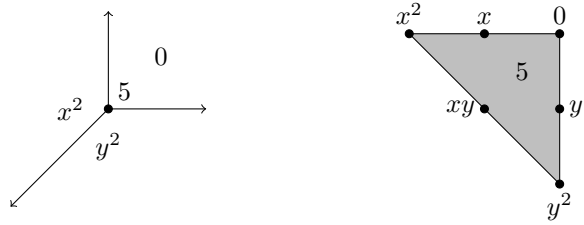
\begin{figure}
\begin{center}
\begin{tikzpicture}
\draw [<->] (0,1.3) -- (0,0) -- (1.3,0);
\draw [<-] (-1.3,-1.3) -- (0,0);
\node at (.7,.7) {$0$};
\node at (-.5, 0) {$x^2$};
\node at (0,-.5) {$y^2$};
\markpoint{0,0}
\node [above right] at (0,0) {$5$};
\path [fill=lightgray] (\ds, \dcgrid) -- (\ds + 2*\dcgrid, \dcgrid) -- (\ds + 2*\dcgrid, -\dcgrid) -- (\ds, \dcgrid);
\conicdualcomplex
\node at (\ds +3*\dcgrid/2,\dcgrid/2) {$5$};
\end{tikzpicture}
\end{center}
\caption[An extreme example of a weighting with higher multiplicity]{An extreme example of a weighting with higher multiplicity.}
\label{fig:double-line}
\end{figure}

\section{Tropical linear algebra with multiplicities} \label{sec:trop-la}

Our goal in this section to state and prove generalizations of Theorems \ref{str_square} and \ref{thm:str_n-1xn} that take into account our notion of multiplicity.

\subsection{Definitions and statements}

We begin by introducing the definition of a \emph{tropical-weighted} (henceforth \emph{tw}-) \emph{matrix}.
\begin{definition} \label{tmdef}
A $K \times N$ \matrix{} $A$ is a $M \times N$ matrix, together with a partition $\sum_{i=1}^{M} m_i = K$, $m_i \ge 1$. 

For a $K \times N$ \matrix{}, we say that a vector $\vx$ is in the \kernal{} of $A$ if for each row $i$
\[
    \min_j \{x_j + A_{ij}\}
\]
is achieved at least $m_i+1$ times.

The \perm{} of an $N \times N$ \matrix{} $A$ is 
\[
    \min_{\mathcal I} \left\{\sum_{i} \sum_{j \in I_i} A_{ij}\right\}, \label{eq:minpart}
\]
where the minimum is over all partitions $\mathcal I = \{I_i\}_{i=1}^{K}$ of $\{1, \dots, N\}$ with $|I_i|=m_i$. The $I_j$ themselves are ordered, but the elements of $I_j$ are not. We say $A$ (or its \perm{}) is \sing{} if the minimum above is obtained more than once.
\end{definition}

\begin{remark}
	One could also describe the \perm{} by saying that it is the minimum sum obtained by choosing, for each $i$, $m_i$ entries from row $i$ so that exactly one entry is chosen from each column. We also note that the value of the \perm{} is the same value that would be obtained by repeating the $i$th row $m_i$ times and taking the usual tropical permanent \eqref{tropperm} (but such a permanent would automatically be singular if $M \neq K$).
\end{remark}

\begin{example}
Consider the $3 \times 3$ \matrix{}
\[
\begin{array}{c} 2 \\ 1\end{array} \begin{bmatrix} 0&0&5 \\ 2 & 1 & 1 \end{bmatrix}.
\]
Here we use the numbers on the left to indicate the $m_i$. The \perm{} of this matrix is 1, obtained by the partition $\{\{1,2\},\{3\}\}$, that is, one chooses the two zeros in the first row and the rightmost 1 in the second row. One can easily see that this is the uniquely minimizing among partitions $\mathcal I$, so this \matrix{} is \nsing{}. 
\end{example}

\begin{definition}
When $A$ is $(N-1) \times N$, a maximal \minor{} of $A$ is the \perm{} of the $(N-1) \times (N-1)$ \matrix{} formed by deleting a column of $A$.
\end{definition}

Our main theorems of this section are:
\begin{theorem}
\label{square}
An $N \times N$ \matrix{} is \sing{} if and only if there is a vector in its \kernal{}.
\end{theorem}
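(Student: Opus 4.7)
My plan is to prove both directions of the theorem by combining LP duality for the transportation problem underlying the \perm{} with combinatorial analysis of alternating cycles in an associated bipartite graph.

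For the reverse direction (a vector $\vx$ in the \kernal{} $\Rightarrow$ $A$ is \sing{}), set $y_i = \min_j\{x_j + A_{ij}\}$ and $T_i = \{j : x_j + A_{ij} = y_i\}$, so $|T_i| \ge m_i + 1$. The aim is to exhibit two distinct optimal tw-partitions with support in the tight bipartite graph $G = \bigcup_i \{(i, j) : j \in T_i\}$. The obstacle is that $G$ need not satisfy Hall's condition for $b$-matchings with row demands $m_i$ and column demands $1$. If some column $j^*$ is in no $T_i$, decreasing $x_{j^*}$ by $\min_i\{A_{i,j^*} + x_{j^*} - y_i\} > 0$ makes $j^*$ newly tight for at least one row while leaving all other tight sets unchanged, preserving the \kernal{} condition. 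Otherwise, when every column is already tight, I would perturb $\vx$ within the optimal dual face of the underlying transportation LP (where $-\vx$ plays the role of the dual $v$) to grow the tight sets of the deficient rows, which is possible because multiple primal optima exist. Once Hall's condition holds, $G$ supports a $b$-matching $\mathcal I$ whose value $\sum_i m_i y_i - \sum_j x_j$ meets the LP duality lower bound and hence equals the \perm{}; the excess $|T_i| - m_i \ge 1$ then yields an alternating cycle (traverse from an unmatched tight edge, alternating), whose toggling produces a second optimal $b$-matching $\mathcal I' \ne \mathcal I$, showing that $A$ is \sing{}.

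For the forward direction ($A$ is \sing{} $\Rightarrow$ a \kernal{} vector exists), let $\mathcal I, \mathcal I'$ be two distinct optimal tw-partitions and form the bipartite multigraph $H$ with edges from $\mathcal I \cup \mathcal I'$. Each column has degree $2$ (one edge from each partition) and row $i$ has degree $2 m_i$, so $H$ decomposes into alternating closed walks $j_1 \to r_1 \to j_2 \to \cdots \to j_k \to r_k \to j_1$ that alternate between $\mathcal I$- and $\mathcal I'$-edges at each column. On each such walk, the balance $\sum_s A_{r_s, j_s} = \sum_s A_{r_s, j_{s+1}}$ must hold, for otherwise toggling along just this walk would strictly decrease the value of $\mathcal I$ or $\mathcal I'$, contradicting optimality. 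This balance lets me define $\vx$ on the columns of $H$ by parallel transport $x_{j_{s+1}} = x_{j_s} + A_{r_s, j_s} - A_{r_s, j_{s+1}}$, making $I_r \cup I'_r \subseteq T_r$ and hence $|T_r| \ge m_r + 1$ for every row $r$ appearing in $H$. Rows with $I_i = I'_i$ are absent from $H$; I handle them by induction on $N$, extending $\vx$ by setting $x_j$ for $j \in I_i$ to equalize $A_{ij} + x_j$ at a value that matches some already-handled column's contribution, supplying the missing tight column.

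The main obstacle lies in the technical steps that close each direction: the reverse direction's ``dual perturbation" step (showing one can always reach $\vx$ satisfying both the \kernal{} condition and Hall's condition) and the forward direction's inductive extension (showing the new values $x_j$ assigned for an agreeing row do not disrupt the tight sets of other rows). Both reduce to linear inequalities that I expect to derive from the optimality of $\mathcal I$ under pairwise row--column exchanges, but verifying them rigorously is the crux of the argument.
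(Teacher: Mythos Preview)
Your overall strategy---LP duality/complementary slackness for one direction and alternating-walk parallel transport for the other---is a reasonable alternative to the paper's hypergraph-and-rescaling approach, but the two places you flag as the ``crux'' are genuine gaps, not routine bookkeeping.

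\textbf{Reverse direction.} Your first fix (decrease $x_{j^*}$ for an untouched column) is fine. The second fix is the real issue. Even after every column is tight, Hall's condition can still fail: take $N=4$, all $m_i=1$, $T_1=T_2=T_3=\{1,2\}$, $T_4=\{3,4\}$. To repair this you cannot move a single $x_j$; you must move an entire \emph{connected component} of the tight pattern simultaneously (here, decrease $x_3$ and $x_4$ together so that $T_4$ survives while some $T_i$ eventually absorbs column $3$ or $4$). Making this precise, and showing the process terminates, is exactly the content of Lemma~\ref{game} in the paper: one picks a component with a cycle, shifts it until a new tight entry appears connecting it to another component, and argues that either the number of components drops or a well-founded invariant decreases. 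Your phrase ``perturb within the optimal dual face'' gestures at this but does not supply the component-level argument or the termination measure.

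\textbf{Forward direction.} There are two problems. First, your parallel transport is only consistent \emph{within} a single alternating walk. A row $i$ with $|I_i|=m_i\ge 2$ can appear in several distinct walks, and nothing forces the independently chosen base values on those walks to make $A_{ij}+x_j$ agree across all $j\in I_i\cup I'_i$. The paper sidesteps this via Lemma~\ref{rescalesquare}: it rescales rows \emph{and} columns so that every entry indexed by either optimal partition is $0$, after which $\vx=\mathbf 0$ works on the ``differing'' block automatically. Second, for a row $i$ with $I_i=I'_i$, you need an $(m_i+1)$st tight column coming from outside $I_i$, and you assert one can be arranged ``by induction.'' But choosing $x_j$ for $j\in I_i$ to match some already-handled column's value can easily undershoot $\min_{j'}(A_{ij'}+x_{j'})$ for other rows, destroying their tight sets. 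The paper instead runs an explicit extension: starting from the tw-square block $K\times L$ where $I$ and $J$ differ, it adds $\epsilon=\min_{i\notin K,\,j\in L}A_{ij}$ to the $K$-rows and subtracts it from the $L$-columns, exposing a new zero in some row $i^*\notin K$, then appends $i^*$ and the columns of $I_{i^*}$ to the block. This is the step your inductive sketch is missing.

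In short, both directions need a rescaling move that acts on whole blocks (components, or the current $K\times L$ submatrix), and the proof that such moves terminate. Once you have those, your alternating-cycle and complementary-slackness arguments go through and are essentially equivalent to the paper's good-orientation and linkage-hypergraph language.
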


\begin{theorem}[Tropical Weighted Cramer's Rule]
\label{n-1xn}
Let $A$  be an $(N-1) \times N$ \matrix{}. Then the vector of maximal minors is in the \kernal{} of $A$. Furthermore, this vector is unique up to scaling if and only if every maximal \minor{} of $A$ is \nsing{}.
\end{theorem}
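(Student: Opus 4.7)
My plan is to treat the two assertions in sequence, reducing both to Theorem~\ref{square}.

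For the first claim (that the vector of maximal minors lies in the \kernal{} of $A$), I fix a row $i$ and reinterpret $D_j + A_{ij}$ combinatorially. Consider ``extended partitions'' $\tilde{\mathcal I}$ of $\{1,\dots,N\}$ into parts of sizes $m_1,\dots,m_{i-1},m_i+1,m_{i+1},\dots,m_M$. Any optimal partition of $\{1,\dots,\hat j,\dots,N\}$ computing $D_j$ extends by adjoining $j$ to part $i$ into such an $\tilde{\mathcal I}$ of cost $D_j + A_{ij}$, and conversely dropping $j \in \tilde I_i$ from an extended partition yields an admissible partition for $D_j$. Thus $V_i := \min_j\{D_j + A_{ij}\}$ equals the minimum cost of an extended partition for row $i$; picking an optimal one $\tilde{\mathcal I}^{*,i}$, every $j \in \tilde I_i^{*,i}$ realizes $D_j + A_{ij} = V_i$, supplying the required $m_i+1$ achievers.

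For the easier half of the second claim (a singular minor implies a non-unique kernel), suppose $D_{j_0}$ is \sing. Applying Theorem~\ref{square} to the $(N-1) \times (N-1)$ \matrix{} obtained by deleting column $j_0$ produces a vector $\vx \in \TT^{N-1}$ in its \kernal. I extend $\vx$ to $\vx' \in \TT^N$ by inserting an entry $t$ at coordinate $j_0$; for $t$ sufficiently large, $x'_{j_0} + A_{i,j_0}$ exceeds the minimum already present in every row, so the kernel conditions transfer from the submatrix to $A$. Varying $t$ then produces a family of kernel vectors that cannot all be tropical scalings of the vector of minors.

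For the harder direction (all minors \nsing{} forces uniqueness), suppose $\vy$ is in the \kernal{} and, after tropically scaling, $E := \{j : y_j = D_j\}$ is nonempty; I aim to show $L := \{j : y_j > D_j\} = \emptyset$. A key preliminary lemma---proved by a base-exchange argument on optimal extended partitions in the spirit of matroid exchange---asserts that the assumed \nsing{}ity of every maximal minor forces the achiever set $S_i := \{j : D_j + A_{ij} = V_i\}$ to satisfy $|S_i| = m_i+1$ for every row; otherwise two distinct optimal extended partitions for some row $i$ would share a column in their boosted part, and removing it would yield two optimal partitions of the corresponding minor, contradicting its \nsing{}ity. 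Given this, unpacking $\vy$'s kernel condition row by row shows $y_\ell + A_{i\ell} = V_i$ precisely when $\ell \in S_i \cap E$, forcing $|S_i \cap E| \ge m_i+1 = |S_i|$ whenever the intersection is nonempty. Hence each $S_i$ is wholly contained in $E$ or wholly in $L$; in the former case for every row, choosing any $j_0 \in L$ makes the restriction of the vector of minors to $\{1,\dots,N\} \setminus \{j_0\}$ still satisfy the \kernal{} condition on the $j_0$-th submatrix, rendering $D_{j_0}$ \sing{} by Theorem~\ref{square}---a contradiction. The main obstacle is the residual ``Case B'' in which some row has $S_i \subseteq L$: here I plan to restrict to the submatrix on columns $L$ and iterate a similar exchange, or induct on $N$, ensuring the weighted structure is preserved under restriction. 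Pinning down this iterated reduction so that it terminates with a singular minor is the delicate technical step I expect to be the hardest, and is likely where the techniques borrowed from Sturmfels' maximal-minors paper play their essential role.
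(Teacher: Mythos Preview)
Your treatment of the first assertion via ``extended partitions'' (boosting part $i$ to size $m_i+1$) is correct and is in fact cleaner than the paper's argument, which instead rescales so that a fixed row is identically zero and reasons about ``lower minors''. The easy direction of the uniqueness statement (a \sing{} minor yields a one-parameter family of kernel vectors) matches the paper exactly.

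The hard direction, however, is where your plan and the paper diverge sharply, and your outline has a genuine gap beyond the incompleteness you already flag. Your ``key preliminary lemma'' ($|S_i|=m_i+1$ for every row when all minors are \nsing{}) is not established by the sketch you give: from $|S_i|>m_i+1$ you produce two optimal extended partitions $\tilde{\mathcal I}$ and $\tilde{\mathcal I}'$ for row $i$, but nothing forces $\tilde I_i\cap\tilde I_i'\neq\emptyset$. When $|S_i|\ge 2m_i+2$ the boosted parts can be disjoint, and then removing a common column is unavailable, so no minor is exhibited as \sing{}. Without this lemma your dichotomy ``each $S_i$ lies wholly in $E$ or wholly in $L$'' collapses, and Case~A (which is otherwise fine) does not get off the ground. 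Case~B is, as you say, open-ended; note also that restricting to the column set $L$ does not in general yield an $(|L|-1)\times|L|$ \matrix{}, so a naive induction on $N$ does not apply.

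The paper takes an entirely different route for this direction and never proves your preliminary lemma at all. It introduces a weighted transportation polytope $T$ (row sums $N$, weighted column sums $N-1$) and shows (i) $T$ is the Minkowski sum of the $N$ coordinate embeddings of the weighted Birkhoff polytope $D$, so that $\min_{Y\in T}\langle A,Y\rangle_w$ decomposes into the $N$ maximal \minor{}s and is uniquely attained iff every \minor{} is \nsing{}; and (ii) vertices of $T$ biject with \emph{connected} linkage hypergraphs. A separate combinatorial lemma (Lemma~\ref{game}) shows that any complementary linkage hypergraph to $A_\vx$ can be rescaled into a connected one, indeed into two distinct connected ones if it was disconnected. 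With this in hand, two kernel vectors $\vx,\vy$ either produce two distinct vertices of $T$ attaining the optimum (forcing a \sing{} minor), or share the same connected complementary hypergraph, which forces $\vx$ and $\vy$ to be tropical scalar multiples by a direct propagation argument along the hypergraph. If you want to push your exchange-style approach through, you will effectively need to reinvent the content of Lemma~\ref{game}; this is precisely the ``techniques borrowed from Sturmfels' maximal-minors paper'' you allude to, and it does not reduce to a simple induction on $N$.
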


\begin{example} Consider the $3 \times 4$ \matrix{}:
\[
 3 \begin{bmatrix}1 & 2 & 3 & 4\end{bmatrix}.
\]
Here the 3 on the left indicates that $m_1=3$. The vector of maximal minors is $\begin{bmatrix}9 & 8 & 7 & 6\end{bmatrix}$, which is in the \kernal{}, as the values of $x_j + A_{ij}$ are $\begin{bmatrix}10 & 10 & 10 & 10\end{bmatrix}$. One easily sees that this is unique (up to tropical scaling), and indeed, the minors are \nsing{}. (In fact, a $3 \times 3$ \matrix{} with $m_1 =3$ is always \nsing{}.)
\end{example}

\begin{example}
As a second example, consider
\[
\begin{array}{c} 2 \\ 1\end{array} \begin{bmatrix} 0&0&0&0 \\ 2 & 1 & 1 & 3\end{bmatrix}
\]
The vector of maximal minors is $\begin{bmatrix}1 & 1 & 1 & 1\end{bmatrix}$. However, the vector $\begin{bmatrix}1 & 1 & 1 & 99\end{bmatrix}$ is also in the \kernal{}. And indeed, the 4th minor is \sing{}, with $\{1,2\},\{3\}$ and $\{1,3\},\{2\}$ being partitions achieving the minimum value of 1 and having $\begin{bmatrix} 0& 0& 0 \end{bmatrix}$ in its \kernal{}.
\end{example}

Theorems \ref{square} and \ref{n-1xn} will be proved in Sections \ref{squareproof} and \ref{rectproof}. Notice that in the case that $m_i = 1$ for all $i$, these theorems recover Theorems \ref{str_square} and \ref{thm:str_n-1xn}. One might hope that the weighted theorems could be deduced easily from the unweighted version, however, as far as the author can tell, that does not seem to be the case. Hence we give a proof ``from scratch".

We can summarize the geometric consequences as follows.
\begin{corollary} \label{lahyp}
Let $\Delta$ be a $n$-dimensional lattice polytope and set $N = |\Delta|$. Let $\{\vx_i\}$ be a collection of $K$ points in $\TT^n$, and $m_i$ multiplicities so that $\sum m_i = N-1$. Let $A$ be the $(N-1) \times N$ \matrix{} formed by evaluating lattice points of $\Delta$ at $\vx_i$ and assigning weight $m_i$ to row $i$.  Then the tropical hypersurface with coefficients given by the maximal minors of $A$ has multiplicity at least $m_i$ at $\vx_i$ for all $i$. That hypersurface is unique if and only if the maximal \minor{}s of $A$ are \nsing{}. 
\end{corollary}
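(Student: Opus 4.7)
The plan is to show that this corollary is essentially a direct dictionary translation of Theorem \ref{n-1xn}, where the role of the vector in the tw-kernel is played by the coefficient vector of a tropical hypersurface. Index the columns of $A$ by the lattice points $I \in \L(\Delta)$, so that a polynomial $f = \bigoplus_I a_I \odot x^{\odot I}$ with support in $\L(\Delta)$ corresponds to the vector $\vc = (a_I)_{I \in \L(\Delta)} \in \TT^{N}$. By construction $A_{iI} = \sum_k I_k (\vx_i)_k$, so the expression $\min_I\{c_I + A_{iI}\}$ is exactly the minimum in equation \eqref{eq:min} evaluated at $\vx_i$.

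First I will verify that the multiplicity condition is equivalent to the tw-kernel condition. By definition (see Section \ref{mult}), $V(f)$ has multiplicity at least $m_i$ at $\vx_i$ precisely when the minimum above is achieved by at least $m_i + 1$ choices of $I$. This is exactly the statement that $\vc$ lies in the \kernal{} of $A$ for row $i$. Thus, $\Delta$-hypersurfaces passing through each $\vx_i$ with multiplicity $\ge m_i$ correspond bijectively to coefficient vectors in the \kernal{} of $A$, modulo the obvious equivalence: two coefficient vectors $\vc$ and $\vc'$ that differ only by a global tropical scalar (i.e.\ $c'_I = t + c_I$ for a fixed $t \in \TT$) determine the same hypersurface, and conversely a hypersurface does not detect such a scaling. (Strictly speaking one should also replace $\vc$ by its saturation from the end of Section \ref{trophyp} before reading off the hypersurface, but saturation does not change the set of monomials achieving the minimum at any point, so it does not affect either the kernel membership or the hypersurface.)

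Now I invoke Theorem \ref{n-1xn}. The matrix $A$ is $(N-1) \times N$ with row weights $m_i$, and by that theorem the vector of maximal minors of $A$ lies in the \kernal{} of $A$. By the translation above, this gives a $\Delta$-hypersurface having the required multiplicity at each $\vx_i$, proving the first assertion of the corollary. For the second assertion, Theorem \ref{n-1xn} tells us that the vector of maximal minors is unique in the \kernal{} up to tropical scaling if and only if every maximal \minor{} of $A$ is \nsing{}. Under our correspondence, uniqueness up to tropical scaling of the coefficient vector is exactly uniqueness of the tropical hypersurface, which finishes the proof.

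The argument is essentially formal once the dictionary is set up; the only ``content" beyond Theorem \ref{n-1xn} is the observation that the correspondence between coefficient vectors and hypersurfaces intertwines the kernel/multiplicity conditions and identifies the tropical scaling ambiguity with the obvious ambiguity in representing a tropical hypersurface by a polynomial. There is no substantive obstacle to overcome, so this section of the paper can safely be handled by a short verification rather than a full-length argument.
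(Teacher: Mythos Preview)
Your proposal is correct and matches the paper's approach: the paper states Corollary~\ref{lahyp} immediately after Theorem~\ref{n-1xn} with the sentence ``We can summarize the geometric consequences as follows'' and gives no separate proof, treating it as exactly the dictionary translation you spell out. Your added remarks on saturation and on the scaling ambiguity are a welcome bit of care beyond what the paper makes explicit.
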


\subsection{Hypergraphs}
Graphs provide a convenient way to keep track of patterns formed when solving optimization problems using tropical matrices (see \cite{sturmfels_maximal_1993}). Because we are allowing multiplicities, we will need to work with hypergraphs.

\begin{definition}
A \emph{hypergraph} $G$ is a set $V$ of vertices, together with a set $E$ of subsets of $V$ called edges. We say an edge $e$ \emph{touches} a vertex $v$ if $v \in e$. We require an edge to touch at least two vertices. The \emph{valency} of a vertex is the number of edges touching it.

A \emph{path} in a hypergraph is a sequence $v_1, e_1, v_2, e_2, \dots, v_n$, with $v_i \in V$ and $e_i \in E$ such that $v_i$ and $v_{i+1}$ are both in $e_i$.  There is an equivalence relation on $V$ where $v \sim w$ if there is a path with $v=v_1$ and $w=v_n$. An equivalence class of $V$, together with all edges touching any vertex in the class, is called a \emph{connected component} of $G$. The hypergraph is \emph{connected} if there is only one connected component. 

We call a path a \emph{closed walk} if $v_1=v_n$. A closed walk is a \emph{simple cycle} if no vertex or edge is repeated besides $v_1=v_n$. Simple cycles are considered up to cyclic permutation, that is, the simple cycle $v_1, \dots, v_n=v_1$ is not considered distinct from $v_2, \dots, e_{n-1}, v_n=v_1, e_1, v_2$.

Notice that our definitions allow multi-edges, but not self-edges (``loops"). We also consider the length 2 cycles formed by multi-edges as legitimate cycles.

A hypergraph is called a \emph{graph} if every edge touches two vertices. 

The \emph{edge total} $e(G)$ of a hypergraph is $\sum_{e \in E} (|e|-1)$. Notice that if $G$ is a graph then the edge total is equal to the number of edges. The vertex total is $v(G) = |V|$.
\end{definition}

\begin{notation} \label{graphify}
The following construction allows us to use results about graphs to help prove results about hypergraphs. Given a hypergraph $G$, we may construct a graph $\hat G$ by replacing each edge $e \in E_G$ with a tree connecting all vertices touched by $e$. (Of course $\hat G$ is not uniquely determined by $G$.) $E_{\hat G}$ comes equipped with a surjection $\psi$ onto $E_G$, where an edge of $\hat G$ is associated to the edge of $G$ that gave rise to it. Notice that $e(G) = e(\hat G)$ and $\hat G$ is connected if and only if $G$ is.  
\end{notation}

\begin{proposition} \label{hasloop}
A hypergraph $G$ has a simple cycle if and only if 
\[e(G) +\mbox{\# of components } \ge v(G) + 1.\]
\end{proposition}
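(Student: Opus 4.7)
My plan is to reduce the hypergraph statement to its graph counterpart via the graphification $\hat G$ from Notation \ref{graphify}. For an ordinary graph $H$, a standard induction on the number of edges shows that $H$ has a cycle if and only if $|E_H| + c(H) \ge v(H) + 1$ (equivalently, $H$ is a forest precisely when $|E_H| = v(H) - c(H)$). Combining this with the identities $e(\hat G) = e(G)$, $v(\hat G) = v(G)$, and $c(\hat G) = c(G)$ noted in Notation \ref{graphify}, the numerical inequality in the proposition holds for $G$ if and only if $\hat G$ has a cycle. It therefore suffices to prove the key lemma: \emph{$G$ has a simple cycle if and only if $\hat G$ has a cycle.}

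For the forward direction, a simple cycle $v_1, e_1, v_2, \ldots, e_{n-1}, v_n = v_1$ in $G$ lifts to a closed walk in $\hat G$ by concatenating, for each $i$, the unique path $P_i \subset T_{e_i}$ from $v_i$ to $v_{i+1}$. Since the $e_i$ are distinct, the trees $T_{e_i}$ are pairwise edge-disjoint in $\hat G$, and since tree paths are simple each $P_i$ uses each edge at most once, so the concatenation is a closed trail in $\hat G$ of positive length. It then contains a cycle by the standard fact that any closed trail of positive length in a graph does.

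The reverse direction is the main obstacle. Given a simple cycle $v_1, f_1, \ldots, f_{k-1}, v_k = v_1$ in $\hat G$, I would form the cyclic sequence of hyperedges $\psi(f_1), \ldots, \psi(f_{k-1})$ and collapse maximal runs of equal values to obtain a cyclic sequence $E_1, \ldots, E_m$ with $E_j \neq E_{j+1}$ (indices mod $m$). Acyclicity of each $T_e$ forces $m \ge 2$. Let $w_j$ denote the transition vertex between runs $j$ and $j+1$; the $w_j$ are distinct (they are distinct vertices of the simple cycle in $\hat G$) and satisfy $w_j \in E_j \cap E_{j+1}$. This produces a closed walk $w_1, E_2, w_2, \ldots, w_m, E_1, w_1$ in $G$ with distinct vertices and consecutive-distinct edges. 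If the $E_j$ are also pairwise distinct, it is already a simple cycle of $G$. Otherwise, whenever $E_j = E_{j'}$ with $|j-j'| \ge 2$, both $w_j$ and $w_{j'-1}$ lie in this shared hyperedge, so the sub-walk $w_j, E_{j+1}, w_{j+1}, \ldots, E_{j'-1}, w_{j'-1}$ can be closed via $E_j = E_{j'}$ into a strictly shorter closed walk of length $j'-j \ge 2$ that still has distinct vertices and consecutive-distinct edges (since $E_j \neq E_{j+1}$ and $E_{j'-1} \neq E_{j'} = E_j$ by the original consecutive-distinct property). Iterating this shortening strictly decreases length, so it terminates in a simple cycle of $G$, completing the lemma.
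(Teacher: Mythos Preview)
Your proof is correct and follows essentially the same approach as the paper: both reduce to the standard graph fact via $\hat G$ and then transfer cycles between $G$ and $\hat G$, extracting a simple cycle of $G$ from a cycle of $\hat G$ by a shortening procedure on repeated hyperedges. The only cosmetic difference is that you first collapse maximal runs of equal hyperedges (guaranteeing distinct vertices and consecutive-distinct edges) before shortening, whereas the paper rebases the closed walk so that $f_1\neq f_{n-1}$ and shortens directly.
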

\begin{proof}
The result is standard for graphs. If $G$ has a simple cycle, then so does $\hat G$, so the inequality is satisfied.

Now, assume we have the inequality. Then $\hat G$ has a simple cycle $v_1, e_1, v_2, e_2, \dots, v_n$. This gives a closed walk $v_1, f_1, v_2, f_2, \dots, v_n$ in $G$, where $\psi(e_i) = f_i$. Since $\psi$ only contracts trees, it cannot be the case that the $f_i$ are all the same, so by changing the base point of the closed walk, we may assume that $f_1 \neq f_{n-1}$. If the $f_i$ are not all distinct, then let $f_s=f_t$ with $s<t$ and replace the closed walk with $v_1, \dots, v_s, f_s = f_t, v_t, \dots, f_{n-1},v_n$. Repeat this process if necessary until one obtains a simple cycle.
\end{proof}

\begin{definition}
For this paper, we define a \emph{good orientation} of a graph to be an orientation such that every vertex has exactly one outgoing edge. It is easy to see that this is equivalent to giving a bijection $out:V \rightarrow E$ such that $out(v)$ is an edge touching $v$. This is a special case of a good orientation of a hypergraph, which is a surjection $out:V \rightarrow E$ such that
\begin{itemize}
\item $out(v)$ touches $v$
\item $|out^{-1}(e)| = |e|-1$. 
\end{itemize}
\end{definition}
Notice that a good orientation for $\hat G$ gives one for $G$ by composing with $\psi$.

\begin{proposition} \label{goodorientation}
If $G$ is a connected hypergraph with $e(G) = v(G)$, then it has at least two good orientations. 
\end{proposition}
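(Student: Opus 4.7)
The approach is to reduce to the graph case using the graphification $\hat G$ from Notation \ref{graphify}. Since $\hat G$ has the same vertex set as $G$ and satisfies $e(\hat G)=e(G)=v(G)=v(\hat G)$, and is connected because $G$ is, Proposition \ref{hasloop} gives a simple cycle $C \subset \hat G$. The equality $e(\hat G)=v(\hat G)$ in fact forces $\hat G$ to be unicyclic, since removing any edge of $C$ yields a spanning tree: $\hat G$ consists of the cycle $C$ together with trees attached at various cycle vertices.

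I will next exhibit exactly two good orientations of $\hat G$. Any good orientation of $\hat G$ is a bijection $out: V \to E_{\hat G}$. Starting from a leaf of an attached tree and peeling it off inductively, one sees that $out$ is forced on each tree: every non-root tree vertex maps to its parent edge, and the root (a cycle vertex) is not the preimage of any tree edge. Hence $out$ restricted to $V(C)$ is a bijection to $E(C)$ sending each vertex to an incident cycle edge—precisely a cyclic orientation of $C$, of which there are exactly two. Composing each with $\psi: E_{\hat G} \to E_G$ produces two candidate good orientations of $G$, and one checks directly, using $|\psi^{-1}(f)| = |f|-1$ and the bijectivity of $out$, that the composition satisfies the conditions in the definition of a good orientation of a hypergraph.

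The main step, and really the only substantive one, is to verify that these two induced orientations of $G$ are distinct. Labeling the cycle as $v_1, e_1, v_2, \dots, v_k, e_k, v_1$ (indices cyclic), the two $\hat G$-orientations send $v_i$ to $e_i$ and to $e_{i-1}$ respectively. If their compositions with $\psi$ agreed at every vertex, then $\psi(e_i) = \psi(e_{i-1})$ for every $i$, making the cyclic sequence $\psi(e_1),\dots,\psi(e_k)$ constant and forcing all of $e_1,\dots,e_k$ to lie inside a single hyperedge-tree $\psi^{-1}(f)$; but a tree cannot contain the simple cycle $C$, a contradiction. Thus the two $G$-orientations differ at some $v_i$. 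I expect this non-collapsing argument to be the sole point of difficulty; everything else reduces mechanically to the well-known count of good orientations for a connected unicyclic graph.
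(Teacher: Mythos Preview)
Your proof is correct and follows essentially the same approach as the paper: pass to the graphification $\hat G$, use the unique simple cycle to produce two good orientations (with the attached trees oriented toward the cycle), compose with $\psi$, and argue that the two resulting hypergraph orientations are distinct because otherwise the entire cycle would lie in a single fibre $\psi^{-1}(f)$, which is a tree. Your write-up is a bit more explicit than the paper's in justifying why the orientation is forced on the tree parts and in spelling out the distinctness argument, but the ideas are the same.
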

\begin{proof}
It is a standard result for graphs that there is a unique simple cycle in $\hat G$. There are two distinct good orientations for this cycle---one for each way of going around the cycle. Excluding the edges in the cycle from $\hat G$ gives a forest, with each tree having a distinguished vertex that was part of the cycle. One can extend the good orientation to the forest by taking the flow into the distinguished vertex (each vertex $v$ will be the source of the first edge in the unique path from $v$ to the distinguished vertex). 

It remains to check that these orientations are distinct after composing with $\psi$.  But this could only fail if $\psi$ collapses the cycle---which cannot happen because $\psi$ only collapses trees.
\end{proof}

\begin{definition} \label{def:hypergraphtypes1}
	When considering a \matrix{} $A$ as in Definition \ref{tmdef},
	we say a hypergraph is a \emph{linkage hypergraph} if it has $N$ vertices (corresponding to columns of $A$) and $M$ edges (corresponding to rows of $A$) such that edge $i$ touches exactly $m_i + 1$ vertices.
	
	Now let $A$ be a non-negative $K \times N$ \matrix{}. A linkage hypergraph complementary to $A$ is a linkage hypergraph such that $A_{ij}=0$ whenever $i$ touches vertex $j$.
	
	Notice that a non-negative $A$ has a linkage hypergraph complementary to it if and only if $\mathbf 0$ is in its \kernal{}.
\end{definition}

\begin{corollary} \label{orientation-sing}
If $A$ is a non-negative $N \times N$ \matrix{} and has a connected complementary linkage hypergraph, then $A$ is \sing{}.
\end{corollary}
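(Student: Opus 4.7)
The plan is to exhibit two distinct partitions $\mathcal I = \{I_i\}$ of $\{1,\dots,N\}$ with $|I_i|=m_i$ that both achieve the value $0$ in the \perm{} of $A$. Since $A$ is non-negative, every summand $\sum_i\sum_{j\in I_i} A_{ij}$ is $\ge 0$, so the \perm{} is $\ge 0$ and any partition attaining $0$ is a minimizer. Two such partitions will certify that $A$ is \sing{}.

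The bridge is a map (essentially built into the definitions) from good orientations of the complementary linkage hypergraph $G$ to zero-sum partitions. Given a good orientation $out$ of $G$, set $I_i = out^{-1}(e_i)$, where $e_i$ is the edge of $G$ corresponding to row $i$. Then $|I_i| = |e_i|-1 = m_i$; the sets $I_i$ partition $\{1,\dots,N\}$ because $out$ is a function of $v$; and $j \in I_i$ forces $e_i$ to touch $j$, so by complementarity $A_{ij}=0$. Hence $\sum_i\sum_{j\in I_i}A_{ij}=0$.

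With this translation in hand, I apply Proposition \ref{goodorientation}. Because $A$ is $N\times N$ as a \matrix{}, we have $\sum_i m_i = N$, so the vertex and edge totals of $G$ satisfy $v(G)=N$ and $e(G)=\sum_i\bigl((m_i+1)-1\bigr)=\sum_i m_i = N$. Since $G$ is connected with $e(G)=v(G)$, Proposition \ref{goodorientation} yields two distinct good orientations $out_1\ne out_2$. Pick $v$ with $out_1(v)\ne out_2(v)$, say $out_1(v)=e_a$ and $out_2(v)=e_b$ with $a\ne b$; then $v$ lies in the $a$th slot of the partition built from $out_1$ but in the $b$th slot of the partition built from $out_2$, so the two indexed partitions are distinct. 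This produces the required pair of minimizers.

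There is really no hard step here beyond invoking Proposition \ref{goodorientation}: the arithmetic $e(G)=v(G)=N$ is immediate from $\sum m_i=N$, and turning a good orientation into a zero-achieving partition is tautological given complementarity. The only subtlety worth flagging is that one must record distinctness as \emph{indexed} partitions (not merely as set partitions), which is what the \perm{} sum cares about; this is automatic because distinct edges of $G$ correspond to distinct rows of $A$.
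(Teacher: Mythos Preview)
Your proof is correct and follows essentially the same approach as the paper. The paper's argument is identical in structure---invoke Proposition~\ref{goodorientation} to get two good orientations, translate each into a partition $\mathcal I$ via $I_i = out^{-1}(i)$, and observe that complementarity forces each such partition to achieve value $0$ while non-negativity forces $\permop A \ge 0$---you have simply spelled out the verification that $e(G)=v(G)=N$ and that distinct orientations yield distinct indexed partitions, which the paper leaves implicit.
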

\begin{proof}
By Proposition \ref{goodorientation} the complementary linkage hypergraph has two good orientations. A good orientation of a complementary hypergraph is the same as a choice of partition $\mathcal I$ that certifies that $\permop A \le 0$ (via $I_i = out^{-1}(i)$). But as $A$ was assumed non-negative, we have $\permop A \ge 0$, so $\permop A = 0$ and $A$ is \sing{}.
\end{proof}

\subsection{Some lemmas and the proof of Theorem \ref{square}}
\label{squareproof}

\begin{lemma} \label{rescalesquare}
For any $N \times N$ \matrix{} $A$, one can rescale the rows and columns so that every entry is non-negative and $\permop A = 0$.
\end{lemma}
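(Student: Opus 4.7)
The plan is to interpret the claim as LP duality for a transportation problem. First, I would compute how the \perm{} changes under rescaling. Adding $c_i$ to every entry of row $i$ shifts the value of any partition $\mathcal I$ by $m_i c_i$ (since row $i$ always contributes exactly $m_i$ entries), and adding $d_j$ to every entry of column $j$ shifts by $d_j$ (since column $j$ contributes exactly one entry). So rescaling by $(c_i, d_j)$ replaces $A_{ij}$ by $A_{ij} + c_i + d_j$ and the \perm{} by $\permop A + \sum_i m_i c_i + \sum_j d_j$. Writing $u_i = -c_i$ and $v_j = -d_j$, the lemma is equivalent to producing $(u_i, v_j)$ satisfying $u_i + v_j \le A_{ij}$ for all $i,j$ together with $\sum_i m_i u_i + \sum_j v_j = \permop A$.

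Next, I would recognize $\permop A$ as the optimum of the integer program: minimize $\sum_{ij} A_{ij} x_{ij}$ over $x_{ij} \in \{0,1\}$ subject to $\sum_j x_{ij} = m_i$ and $\sum_i x_{ij} = 1$ (a partition $\mathcal I$ being encoded by $x_{ij} = 1$ iff $j \in I_i$). Its LP relaxation has the same optimum because its feasible polytope is a transportation polytope and hence has integer vertices. The LP dual of the relaxation is precisely $\max \sum_i m_i u_i + \sum_j v_j$ subject to $u_i + v_j \le A_{ij}$, and by strong LP duality this maximum also equals $\permop A$. Taking such optimal $(u_i, v_j)$ and setting $c_i = -u_i$, $d_j = -v_j$ produces a rescaling whose entries are all non-negative and whose \perm{} equals zero.

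The main obstacle is justifying the integrality of the transportation LP; I would either cite it as a standard consequence of the total unimodularity of the incidence matrix or prove it directly by showing that every vertex of the feasible polytope is a $0/1$ matrix (via a cycle-exchange argument on fractional entries). If a self-contained combinatorial proof is preferred, one can imitate the Hungarian algorithm using the hypergraph framework of the previous subsection: start from any rescaling that makes $A$ non-negative, form the hypergraph of its zero entries, and if this hypergraph does not support a linkage hypergraph (equivalently, no partition $\mathcal I$ achieves value $0$), strictly decrease the potential $\sum_i m_i c_i + \sum_j d_j$ along an alternating-walk-style update. The process terminates because the potential is bounded and each step makes measurable progress, at which point the zero pattern supports a linkage hypergraph and certifies $\permop A = 0$. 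Either route completes the proof, but the LP duality version is cleaner to write.
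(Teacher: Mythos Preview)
Your LP duality argument is correct and gives a clean proof: the transportation LP you write down has integral vertices by total unimodularity, so its optimum is $\permop A$, and strong duality produces exactly the potentials $(u_i,v_j)$ you need. The paper takes a different, fully combinatorial route. It first treats the unweighted case by normalizing so that the identity permutation is optimal with zero diagonal, then builds the complete labeled digraph on $\{1,\dots,N\}$ with edge $(i,j)$ labeled $A_{ij}$, and sets $c_i$ to be the minimum path sum over simple paths starting at $i$; non-negativity of the rescaled entries is checked by a case analysis exploiting that every simple cycle has non-negative path sum (since each corresponds to a permutation in the permanent). The weighted case is then reduced to the unweighted one by repeating row $i$ exactly $m_i$ times and observing that the resulting $c_i$'s agree across copies. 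In effect the paper is constructing the dual optimum by hand via shortest paths, whereas you invoke duality abstractly. Your version is shorter and handles the weights directly, at the cost of citing LP duality and integrality of the transportation polytope as external facts; the paper's version is self-contained and elementary, which matches its stated goal of giving a ``purely combinatorial'' treatment, but is longer and requires the somewhat delicate row-repetition reduction for the weighted case.
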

\begin{proof}
First we do the unweighted case when $m_i=1$ for all $i$. By permuting rows and columns and rescaling, we may assume that the diagonal is the minimizing permutaion and that every entry on the diagonal is 0. The problem now is to show that the matrix can be further rescaled to eliminate the negative entries without disturbing the diagonal. 

We construct a labeled graph $G(A)$ by taking complete di-graph on $N$ labeled vertices, with the edge from $i$ to $j$ labeled by $A_{ij}$. For any directed path $\gamma$ in $G(A)$, we define its path sum $p(\gamma)$ to be the sum of all the labels of the edges in the path. Notice that the path sum of any simple cycle corresponds to a (cyclic) permutation in the formula for the tropical permanent, so we see that the tropical permanent being equal to zero implies that the path sum of any simple cycle is non-negative.

Notice that if we subtract $c$ from row $i$ and add $c$ to column $i$, we still have a matrix with each diagonal entry equal to zero and tropical permanent equal to zero. 
Hence it is enough to pick $c_i$ so that $A_{ij} - c_i + c_j \ge 0$.

Pick 
\[
c_i = \min \left\{p(\gamma):\gamma \mbox{ is a simple path starting at }i\right\}.
\]
Here a simple path is one that uses any vertex at most once. 

Now we check $A_{ij} -c_i + c_j \ge 0$. There is a simple path $\gamma$ from vertex $j$ with $p(\gamma)=c_j$. If $\gamma$ does not meet vertex $i$, then we can form a simple path $\gamma'$ starting at $i$ by concatenating the edge from $i$ to $j$ with $\gamma$. Then $c_i \le  p(\gamma') = c_j + A_{ij}$, as desired. If $\gamma$ meets vertex $i$, we can split $\gamma$ at vertex $i$ into two simple paths: $\gamma_1$ from $j$ to $i$ and $\gamma_2$ the rest of it (possibly trivial). We have $p(\gamma_1) + p(\gamma_2) = c_j$. Now, we can form a simple cycle by concatenating $\gamma_1$ with the edge from $i$ to $j$. Hence $p(\gamma_1) + A_{ij} \ge 0$.   Furthermore, $\gamma_2$ is a simple path from $i$, so $c_i \le  p(\gamma_2)$. Hence
\[
 A_{ij} - c_i + c_j \ge -p(\gamma_1) - c_i + p(\gamma_1) + p(\gamma_2) \ge 0,
\]
as desired.

For the weighted case, we can permute and rescale it so that the optimizing partition is the ``diagonal" one, i.e., $I_1=\{1, \dots, m_1\}$, $I_2=\{m_1+1, \dots m_1 + m_2\}$, etc., and so that each of these entries are 0. Then we form a square (nonweighted) tropical matrix by repeating the $i$th row $m_i$ times. Then run the argument above. Notice that two vertices (say $i$ and $k$) corresponding to a row repeated in this way will have all edges between them labeled 0. Hence $c_i = c_k$, and the resulting matrix will have the same pattern of repeated rows. One can then reidentify these rows to get the desired \matrix{}.
\end{proof}

The following lemma is used in the proof of both theorems. The full strength of it is needed only for the $(N-1) \times N$ case.
\begin{figure}
\begin{center}
\begin{tikzpicture}[-,>=stealth',shorten >=1pt,auto,node distance=3cm,
                    thick,main node/.style={circle,draw,font=\sffamily\Large\bfseries}]

  \node[main node] (2) {2};
  \node[main node] (1) [right of=2] {1};
  \node[main node] (5) [right of=1] {5};
  \node[main node] (3) [below right of=2] {3};
  \node[main node] (4) [right of=3] {4};

  \path[every node/.style={font=\sffamily\small}]
    (1) edge node {1} (2)
        edge node {2} (3)
    (2) edge node {3} (3)
    (3) edge node {4} (4);
\end{tikzpicture}
\raisebox{9ex}{
$
\begin{bmatrix}
0 & 0 & 3 & 3 & 3 \\
0 & 4 & 0 & 4 & 5 \\
5 & 0 & 0 & 3 & 4 \\
6 & 7 & 0 & 0 & 1
\end{bmatrix}
$}
\vspace{3ex}

\begin{tikzpicture}[-,>=stealth',shorten >=1pt,auto,node distance=3cm,
                    thick,main node/.style={circle,draw,font=\sffamily\Large\bfseries}]

  \node[main node] (2) {2};
  \node[main node] (1) [right of=2] {1};
  \node[main node] (5) [right of=1] {5};
  \node[main node] (3) [below right of=2] {3};
  \node[main node] (4) [right of=3] {4};

  \path[every node/.style={font=\sffamily\small}]
    (1) edge node {1} (2)
        edge node {2} (3)
    (2) edge node {3} (3)
    (5) edge node {4} (4);
\end{tikzpicture}
\raisebox{9ex}{
$
\begin{bmatrix}
0 & 0 & 3 & 3 & 2 \\
0 & 4 & 0 & 4 & 4 \\
5 & 0 & 0 & 3 & 3 \\
6 & 7 & 0 & 0 & 0
\end{bmatrix}
$
}
\vspace{3ex}

\begin{tikzpicture}[-,>=stealth',shorten >=1pt,auto,node distance=3cm,
                    thick,main node/.style={circle,draw,font=\sffamily\Large\bfseries}]

  \node[main node] (2) {2};
  \node[main node] (1) [right of=2] {1};
  \node[main node] (5) [right of=1] {5};
  \node[main node] (3) [below right of=2] {3};
  \node[main node] (4) [right of=3] {4};

  \path[every node/.style={font=\sffamily\small}]
    (1) edge node {2} (3)
        edge node {1} (5)
    (2) edge node {3} (3)
    (5) edge node {4} (4);
\end{tikzpicture}
\raisebox{9ex}{
$
\begin{bmatrix}
0 & 0 & 3 & 1 & 0 \\
0 & 4 & 0 & 2 & 2 \\
5 & 0 & 0 & 1 & 1 \\
8 & 9 & 2 & 0 & 0
\end{bmatrix}
$
}
\end{center}
\caption{An example of running the proof of Lemma \ref{game} on an explicit (unweighted) matrix. Node labels refer to columns, edge labels refer to rows. To move from the first picture to the second, we used case (2), and from the second to the third used case (1). In the last picture, one could just as well have edge 1 connect nodes 2 and 5.}
\label{fig:gameproof}
\end{figure}
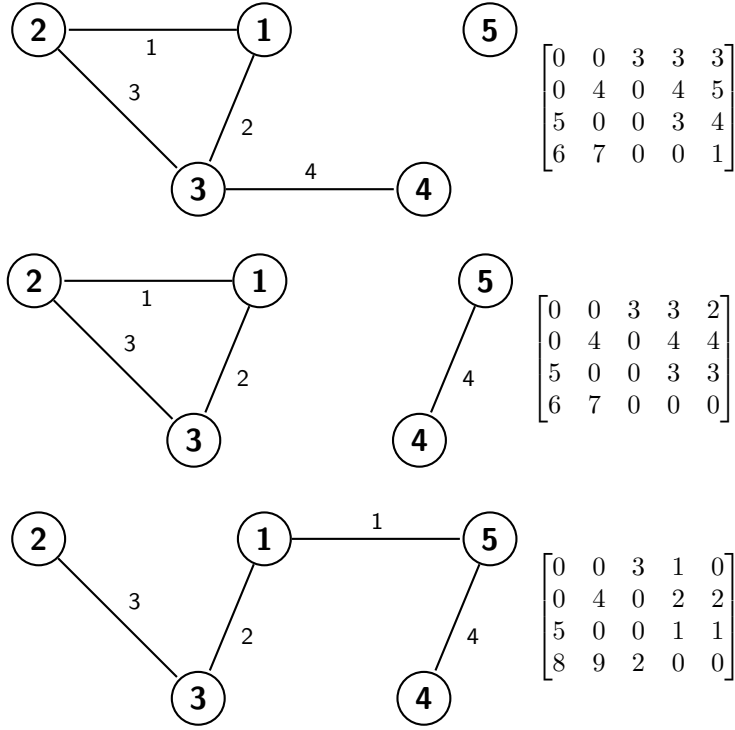
\begin{lemma} \label{game}
Suppose $A$ is a non-negative $K \times N$ \matrix{} with $K \ge N-1$.  Let $G$ be a linkage hypergraph complementary to $A$. If $G$ is not connected, then we may rescale rows and columns to produce $A'$ that is non-negative and has two complementary connected linkage hypergraphs.
\end{lemma}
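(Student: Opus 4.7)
My plan is to modify $A$ by a sequence of row and column rescalings, each of which preserves non-negativity and all the zero entries certifying the current complementary linkage hypergraph while forcing at least one additional entry of $A$ to become zero. After finitely many such rescalings, the accumulated zero pattern will be rich enough to support two distinct connected complementary linkage hypergraphs. A first key observation is that some component of $G$ must contain a simple cycle: writing $G_1,\ldots,G_c$ for the components of $G$, the total cyclomatic excess is
\[
\sum_{s=1}^{c}\bigl(e(G_s) - v(G_s) + 1\bigr) \;=\; K - N + c \;\ge\; c - 1 \;\ge\; 1,
\]
so by Proposition~\ref{hasloop} at least one $G_s$ contains a simple cycle.

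The basic rescaling step takes a proper nonempty subset $S \subsetneq \{1,\ldots,c\}$ of the component indices. Setting $V_S = \bigcup_{s\in S} V(G_s)$ and $E_S = \bigcup_{s\in S} E(G_s)$, for $t > 0$ I subtract $t$ from each column in $V_S$ and add $t$ to each row in $E_S$. Entries in $E_S \times V_S$ and $E_{S^c} \times V_{S^c}$ are unchanged (so all zeros of $G$ survive), entries in $E_S \times V_{S^c}$ increase by $t$, and entries in $E_{S^c} \times V_S$ decrease by $t$. Taking $t = \min\{A_{ij} : i \in E_{S^c},\, j \in V_S\}$ leaves the resulting matrix non-negative and produces a new zero at some $(i^*, j^*) \in E_{S^c} \times V_S$ (``case~(2)''); the sign-reversed rescaling (``case~(1)'') instead places a new zero in $E_S \times V_{S^c}$. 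I would choose $S$ and the direction guided by the cyclic component, so that swapping a vertex of edge $i^*$ for $j^*$ either merges two components of the linkage hypergraph without leaving any vertex untouched, or accumulates extra zero positions on a single row to serve as slack. Iterating (the process terminates since each step strictly lowers the number of nonzero entries), the zero pattern eventually admits a connected complementary linkage hypergraph $G'$ together with some row having strictly more than the required number of zero entries; toggling one incidence there yields a second complementary linkage hypergraph $G''$, and a careful choice made possible by the cycle ensures that $G''$ is also connected.

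The main obstacle is the book-keeping that updates the linkage hypergraph after each rescaling. I must simultaneously guarantee that (i) the modified hypergraph is still a valid linkage hypergraph, with each edge $i$ touching exactly $m_i + 1$ zero vertices, (ii) no vertex is left untouched, which would violate connectedness, and (iii) genuine structural progress is made at each iteration---either the component count strictly decreases, or we are positioned to produce a second distinct connected linkage hypergraph on the next step. The cycle identified by the counting argument is the crucial source of flexibility: rotating a swap along the cycle provides an alternative route by which the displaced vertex remains reachable, which is what ultimately lets me guarantee that \emph{both} final linkage hypergraphs are connected. The computation in Figure~\ref{fig:gameproof}, which applies case~(2) then case~(1), exhibits this interplay of the two rescaling directions in a concrete example.
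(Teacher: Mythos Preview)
Your opening count is correct and matches the paper's implicit use of Proposition~\ref{hasloop}. The basic rescaling move you describe is also the right one, and (up to replacing $S$ by its complement) it is exactly the paper's operation~$(*)$.

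However, you have misread what ``case~(1)'' and ``case~(2)'' are. They are \emph{not} two rescaling directions. In the paper, every step uses the \emph{same} rescaling: take $S$ to be the single component $C$ that contains a cycle and is minimal among such (minimal in vertex count, call this $M(G)$), subtract from the rows of $C$ and add to the columns of $C$, producing a new zero at $(i^*,j^*)$ with $i^*$ an edge of $C$ and $j^*$ a vertex outside $C$. The case split is then purely structural: delete edge $i^*$ from $C$ and count the resulting pieces $C_1,\ldots,C_k$. If $k\le m_{i^*}$ (case~(1)), some piece $C_\beta$ contains two vertices of $i^*$; swapping one of them for $j^*$ merges $C$ with $j^*$'s component and strictly lowers the component count. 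If $k=m_{i^*}+1$ (case~(2)), a short edge/vertex count shows some $C_\beta$ still contains a cycle; swapping its unique $i^*$-vertex for $j^*$ makes $C_\beta$ a strictly smaller cyclic component, so $M(G)$ strictly decreases. In Figure~\ref{fig:gameproof}, both steps use the same rescaling direction; what changes is which structural case applies.

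This is exactly the ``book-keeping'' you flag as the main obstacle, and it is where your proposal stops short. You never say which vertex of $i^*$ to displace, and your termination measure (``the number of nonzero entries strictly decreases'') does not control the component count and can even stall when $t=0$. The quantity that actually drives the argument is the pair $(\text{number of components},\,M(G))$ in lexicographic order; once you track that, the process terminates with a case~(1) step at component count~$2$, and the two vertices of $i^*$ lying in the same $C_\beta$ give the two distinct connected linkage hypergraphs directly---no separate ``row with slack'' mechanism is needed.
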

\begin{proof}
See Figure \ref{fig:gameproof} for an illustration of the proof.

Given any connected component of $G$ with edges $E$ and vertices $V$, one can perform the following operation (*): Subtract from the rows of $A$ corresponding to $E$ and add to the columns of $A$ corresponding to $V$ to form $A'$. Do this as much as possible without violating non-negativity, i.e., add and subtract $\min_{i \in E, j \notin V} A_{ij}$. Notice that $G$ is also complementary to $A'$. Let $(i^*,j^*)$ be the indices achieving the minimum above. Then $A'_{i^*j^*}=0$, so we may replace any vertex touching edge $i^*$ with vertex $j^*$ and get a new linkage hypergraph $G'$ that is complementary to $A'$.

Let $M(G)$ be the minimal number of vertices of a connected component of $G$ that has a cycle (such a component exists by Proposition \ref{hasloop}), and let $C$ be one of these components. We will perform operation (*) with this component. To choose the vertex touching $i^*$ to be replaced by $j^*$, we must consider the components $\{C_\alpha\}$, $\alpha = 1, \dots, k$, formed from $C$ by deleting edge $i^*$. 
\begin{enumerate}
	\item \label{item:connect} If $k \le m_{i^*}$, then there is some $C_\beta$ that contains at least two vertices of edge $i^*$. Pick one of these vertices and replace it with $j^*$ to form $G'$. Then we see that $G'$ has fewer components than $G$.
	\item If $k = m_{i^*}+1$, then we cannot reduce the number of components of $G$. Instead, let $\bar C$ be the subgraph $\bigcup_\alpha C_\alpha$ and notice that  
	\begin{align*}
    	e(\bar C) + \mbox{ \# of components of } \bar C &= e(\bar C) + m_{i^*} + 1 \\
	    &= e(C) + 1 \\
	    &\ge v(C) + 1 \\
	    &=v(\bar C) + 1
	\end{align*}
	so Proposition \ref{hasloop} implies that there is some $C_\beta$ that has a cycle. Pick the vertex of $C_\beta$ touched by $i^*$ and replace it with $j^*$. Then we see that $C_\beta$ will be a component of $G'$, which will imply that $M(G') < M(G)$. 
\end{enumerate}
Hence at each step, either the number of connected components decreases, or $M(G)$ decreases. Since $M(G)$ is non-negative, it cannot decrease forever, so eventually the number of components must decrease. Furthermore, notice that the final step will be of type \eqref{item:connect}, and so there are two distinct choices giving hypergraphs complementary to $A'$.
\end{proof}

\begin{notation} \label{not:Ax}
If a matrix $A$ has a vector $\vx$ in its \kernal{}, one can scale column $j$ by $x_j$, forming a new matrix which has $\mathbf 0$ in the \kernal{}. Then, one can scale each row so that the minimal entry is zero. We call the matrix obtained this way $A_\vx$. 
\end{notation}

\begin{proof}[Proof of Theorem \ref{square}]
First, assume that $A$ has a vector $\vx$ is its \kernal{}. Then, there is a linkage hypergraph complementary to $A_\vx$. We may assume that it is connected (after possibly rescaling) by Lemma \ref{game}. Hence by Corollary \ref{orientation-sing} $A$ is \sing{}.

Now, suppose we are given two partitions $I$ and $J$ . By Lemma \ref{rescalesquare} we may assume that $A$ has non-negative entries that every entry corresponding to either partition is 0, that is $A_{ij} = 0$ whenever $j \in I_i$ or $j \in J_i$. Now, take $K = \{k:J_k \neq I_k\}$, and let $L = \{\ell : \ell \in J_k \cup I_k \mbox{ for some } k \in K\}$.

One can check that $\sum_{k \in K} m_k = |L|$ (that is, the submatrix determined by $L$ and $K$ is ``tw-square''). 

Now notice that $\mathbf 0_K$ is in the tw-kernel of the submatrix determined by $K$ and $L$. We wish to extend this submatrix. Take  $\epsilon = \min_{i \notin K,j \in L} A_{ij}$ and then add $\epsilon$ to each row in $K$ and subtract $\epsilon$ from each column in $L$. We see now that there is a zero entry in some row $i^* \notin K$ and column $j^* \in L$. We can append $i^*$ to $K$ and $J_{k^*} = I_{k^*}$ to $L$ and we still have the property that $\mathbf 0_K$ is in the tw-kernel of the submatrix determined by $K$ and $L$. We may repeat this until $|L| = |K| =  N$. The resulting matrix will have $\mathbf 0$ in its tw-kernel, so the theorem is proved.
\end{proof}

\subsection{Stochastic and transportation polytopes and the proof of Theorem \ref{n-1xn}}
\label{rectproof}
\begin{definition} \label{def:transpoly}
Let $D$ be the polytope of weighted doubly stochastic square $(N-1) \times (N-1)$ \matrices{}---that is, we require $\sum_j A_{ij} = 1$ for each $i$, and $\sum_i m_iA_{ij} = 1$ for each $j$.
\end{definition}

\begin{definition}	
Let $Y$ be a non-negative $K \times N$ \matrix{}. The support hypergraph of $Y$ is the hypergraph formed by having edge $i$ touch vertex $j$ whenever $Y_{ij} > 0$. If this prescription would cause an edge to touch one or zero vertices, omit that edge.
\end{definition}

\begin{lemma} \label{lem:notvert}
The support hypergraph of $Y$ contains a simple cycle if and only if $Y$ is not uniquely determined by the data of its support, row sums, and weighted column sums.
\end{lemma}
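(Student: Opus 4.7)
The plan is to prove the two implications separately. The forward direction (``cycle implies non-unique'') is handled by an explicit perturbation supported on the cycle; the backward direction (``non-unique implies cycle'') follows from a valency argument applied to the difference of two matrices with the same data.

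For the forward direction, let $v_1, e_1, v_2, e_2, \dots, v_k, e_k, v_{k+1}=v_1$ be a simple cycle in the support hypergraph of $Y$. I would define a perturbation $Z$ whose only nonzero entries are $Z_{e_i, v_i} = \alpha_i$ and $Z_{e_i, v_{i+1}} = -\alpha_i$, with all other entries zero. The row sums of $Z$ vanish automatically. At each vertex $v_j$, only the edges $e_{j-1}$ and $e_j$ contribute to its weighted column sum (because $Z$ is zero off the $2k$ chosen entries), giving the condition $m_{e_j}\alpha_j = m_{e_{j-1}}\alpha_{j-1}$. This is solved consistently around the entire cycle by taking $m_{e_i}\alpha_i$ to be a common constant $\epsilon$; the cyclic ratios telescope to $1$. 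For $|\epsilon|$ small enough, $Y + Z$ is still non-negative, has the same support as $Y$, and has identical row sums and weighted column sums, providing the required second matrix.

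For the converse, suppose $Y' \neq Y$ shares the same support, row sums, and weighted column sums, and set $Z = Y' - Y$. Then $Z$ is nonzero, is supported inside $\supp(Y)$, and has vanishing row sums and weighted column sums. Restrict attention to the sub-hypergraph $G$ of the support hypergraph of $Y$ obtained by keeping only those rows and columns where $Z$ has a nonzero entry. Zero row sums force each edge of $G$ to touch at least two vertices (a single nonzero entry in a row would produce a nonzero row sum), and the positivity of the weights $m_i$ together with vanishing weighted column sums forces each vertex of $G$ to have valency at least two. Passing to the associated graph $\hat G$ from Notation \ref{graphify}, each vertex of $\hat G$ still has degree at least two, which in any finite graph gives $|E(\hat G)| \geq |V(\hat G)|$, hence $e(G) \geq v(G)$. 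Proposition \ref{hasloop} then produces a simple cycle in $G$, and this cycle is automatically a simple cycle in the support hypergraph of $Y$.

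The main point to verify carefully is the closure of the cyclic consistency condition in the forward direction; the telescoping of $\prod_j (m_{e_{j-1}}/m_{e_j})$ around the cycle makes the uniform choice $\alpha_i = \epsilon/m_{e_i}$ work. A secondary subtlety is that a hyperedge $e_i$ appearing in the cycle may touch additional columns outside the cycle, but $Z$ is zero at those positions, so those columns' weighted sums are undisturbed. Everything else is bookkeeping on the definitions of support hypergraph and simple cycle.
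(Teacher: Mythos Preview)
Your forward direction is essentially identical to the paper's: both perturb along the cycle with $\alpha_i = \epsilon/m_{e_i}$, and your explicit check of the cyclic consistency condition is just a more verbose version of what the paper asserts in one line.

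Your backward direction is correct but takes a different route. The paper argues by an explicit chase: starting from any entry where $Z=Y'-Y$ is nonzero, the vanishing row sum forces another nonzero entry in that row, the vanishing weighted column sum forces another nonzero entry in that column, and so on, until a vertex or edge repeats, at which point a simple cycle is read off directly. You instead pass to the sub-hypergraph $G$ where $Z$ is nonzero, observe that every edge touches at least two vertices and every vertex has valency at least two, lift to $\hat G$ to get $e(G)\ge v(G)$ by the handshake inequality, and invoke Proposition~\ref{hasloop}. Both arguments rest on the same underlying observation (nonzero rows and columns of $Z$ each have at least two nonzero entries), but yours trades the paper's hands-on walk-building for a cleaner counting argument at the cost of using more of the paper's hypergraph machinery. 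One small point worth making explicit in your write-up: the simple cycle you obtain lies in $G$, and you should note that since the support of $Z$ is contained in that of $Y$, this cycle is automatically a simple cycle in the support hypergraph of $Y$ itself.
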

\begin{proof}
Let $v_1, e_1, v_2, e_2, v_3, \dots, e_\ell, v_{\ell+1}=v_1$ be a simple cycle of the support hypergraph of $Y$. For each $k=1, \dots, \ell$, one can add a small amount $\epsilon/m_{e_k}$ to entries $(e_k, v_k)$ and subtract $\epsilon/m_{e_{k+1}}$ from entries $(e_{k+1}, v_k)$. This new matrix still has the same row and weighted column sums and the same support.
	
Now suppose $Y'\neq Y$ has the same support and row and weighted column sums as $Y$. By taking convex combinations of $Y$ and $Y'$, we see that there is a deformation of the non-zero entries of $Y$ that preserves the row and column sums. Suppose $Y_{e_1,v_1}$ decreases in this deformation. Thus, there must be a $v_2$ with $Y_{e_1,v_2}$ increasing (in order to preserve the $e_1$ row sum). Then, there must be an $e_2$ with $Y_{e_2,v_2}$ decreasing (to preserve the $v_2$ column sum). Continue in this way until some vertex or edge is repeated. If a vertex is repeated, say $v_m = v_s$ with $m<s$, then $v_m, e_m, v_{m+1}, \dots, e_{s-1}, v_s$ is a simple cycle. If an edge is repeated, say $e_m = e_s$, then $v_m, e_m, \dots, v_s, e_s, v_m$ is a simple cycle.
\end{proof}

\begin{remark} \label{rem:vert}
Given a polytope defined by a collection of equality and inequality constraints, we see that a point of the polytope is a vertex if it is the unique solution to the system of equations formed by replacing all the tight inequality constraints with equalities (and keeping all the equality constraints). Since in our situation, the inequality constraints are $A_{ij} \ge 0$, we see that a vertex of $D$ is a matrix whose entries are uniquely determined by its support and the constraints on the row and column sums. By Lemma \ref{lem:notvert}, we see that this is equivalent to the support hypergraph not containing a cycle.
\end{remark}

\begin{definition}
For any two $K \times N$ \matrices{}, define  the weighted inner product $\left<\cdot,\cdot\right>_w$ by
\[
	\left<A,Y\right>_w = \sum_{i,j} m_i A_{ij}Y_{ij}.
\]
\end{definition}

\begin{lemma} \label{square-perm}
We have:
\begin{enumerate}
\item The vertices of $D$ are in bijection with partitions $\mathcal I$ as in \eqref{tmdef}. Explicitly, a vertex corresponding to $\mathcal I$ is a matrix $Y_{\mathcal I}$  with $(Y_{\mathcal I})_{ij} = \frac{1}{m_i}$ if $j \in I_i$ and $0$ otherwise. 
\item
\[
 \min_{Y \in D} \left< A, Y \right>_w = \permop(A)
\] 
and the minimizing $Y$ is unique if and only if $A$ is \nsing{}.
\end{enumerate}
\end{lemma}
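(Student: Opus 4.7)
My plan is to prove (1) first, then derive (2) as a straightforward consequence.

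For the forward direction of (1), I would start by checking that $Y_{\mathcal I} \in D$: row $i$ contains $m_i$ nonzero entries of value $1/m_i$, so its sum is $1$, and each column $j$ lies in a unique part $I_{i^\ast}$, contributing $m_{i^\ast}\cdot 1/m_{i^\ast}=1$ to the weighted column sum. To see that $Y_{\mathcal I}$ is a vertex I will apply Remark \ref{rem:vert} and rule out a simple cycle in its support hypergraph. That support hypergraph has edge set $\{I_i : m_i \geq 2\}$; its connected components are exactly the sets $I_i$ (whether singletons or not), so the number of components equals $M$ and the edge total is $\sum_{i : m_i \geq 2}(m_i - 1) = \bigl(\sum_i m_i\bigr) - M$. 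Since the vertex set has size $\sum_i m_i$, we have $e(G) + \#\text{components} = \sum_i m_i = v(G) < v(G) + 1$, and Proposition \ref{hasloop} gives the desired absence of a cycle.

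For the reverse direction, I plan to use a ``row-duplication'' trick to reduce to the classical Birkhoff--von Neumann theorem. Set $n = \sum_i m_i$ and define a linear map $\phi\colon D \to \mathrm{Bir}_n$ (the $n \times n$ Birkhoff polytope) sending $Y$ to the matrix obtained by replacing row $i$ with $m_i$ identical copies of that row. A direct check shows $\phi(Y)$ is doubly stochastic and that $\phi$ is an injection onto the fixed locus of the row-permutation action of $G := S_{m_1} \times \cdots \times S_{m_M}$ on $\mathrm{Bir}_n$. Writing $\phi(Y) = \sum_\sigma \lambda_\sigma P_\sigma$ by Birkhoff--von Neumann and averaging over the $G$-action (under which $\phi(Y)$ is fixed) gives $\phi(Y) = \sum_\sigma \lambda_\sigma \bar P_\sigma$ where $\bar P_\sigma := \tfrac{1}{|G|}\sum_{g\in G} g\cdot P_\sigma$. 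A short calculation identifies $\phi^{-1}(\bar P_\sigma) = Y_{\mathcal I_\sigma}$, where $\mathcal I_\sigma$ is the partition whose $i$th part is $\sigma(\text{group}_i)$. Thus every $Y \in D$ is a convex combination of matrices of the form $Y_{\mathcal I}$, and since $Y$ is extreme in $D$ it must coincide with some single $Y_{\mathcal I}$.

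Part (2) will then follow from a direct calculation: evaluating at each vertex gives
\[
  \langle A, Y_{\mathcal I}\rangle_w \;=\; \sum_{i,j} m_i A_{ij} (Y_{\mathcal I})_{ij} \;=\; \sum_i \sum_{j \in I_i} A_{ij},
\]
which is exactly the summand indexed by $\mathcal I$ in the definition of the \perm{}. Since a linear functional on a polytope attains its minimum at a vertex, part (1) then yields
\[
  \min_{Y \in D} \langle A, Y\rangle_w \;=\; \min_{\mathcal I} \sum_i \sum_{j \in I_i} A_{ij} \;=\; \permop(A).
\]
The minimum is attained uniquely in $D$ if and only if a single vertex achieves it, equivalently a single partition $\mathcal I$ achieves $\permop(A)$, which is the definition of $A$ being \nsing{}.

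I expect the main obstacle to be the identification $\phi^{-1}(\bar P_\sigma) = Y_{\mathcal I_\sigma}$ in part (1): one must verify that $G$-symmetrizing a permutation matrix flattens to a matrix whose $i$th block of rows is the characteristic vector of $\sigma(\text{group}_i)$ scaled by $1/m_i$, and that as $\sigma$ runs over coset representatives of $G$ in $S_n$ the partitions $\mathcal I_\sigma$ range over all partitions of $[n]$ with part sizes $(m_i)$. A viable alternative route that avoids Birkhoff--von Neumann is a leaf-stripping argument on the bipartite support graph of $Y$: any tree component containing more than one row, under the constraints defining $D$, forces a column leaf to pin a putatively positive entry to a value incompatible with either positivity or the assumed support, reducing every component of the support to a single-row star and thereby recovering $Y = Y_{\mathcal I}$.
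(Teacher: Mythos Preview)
Your proposal is correct, and part (2) is argued exactly as in the paper. The forward half of (1) is also essentially the paper's, though the paper is even more direct: each column of $Y_{\mathcal I}$ has a single nonzero entry, which is then pinned down by the weighted column sum, so $Y_{\mathcal I}$ is determined by its support and is a vertex by Remark~\ref{rem:vert}. Your counting via Proposition~\ref{hasloop} reaches the same conclusion with slightly more machinery.

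The reverse half of (1) is where you genuinely diverge. The paper stays entirely inside its support-hypergraph framework: starting from a vertex $Y$ that is \emph{not} of the form $Y_{\mathcal I}$, it locates an entry $Y_{k\ell}\in(0,1/m_k)$; since every entry of row $k$ is at most $1/m_k$ (by the column constraint) and the row sums to $1$, a short integrality argument forces a second entry of row $k$ in $(0,1/m_k)$, whose column again has valence $\ge 2$. Iterating this produces a simple cycle in the support hypergraph, contradicting Remark~\ref{rem:vert} via Lemma~\ref{lem:notvert}. Your route instead duplicates row $i$ into $m_i$ copies, lands in the ordinary Birkhoff polytope, invokes Birkhoff--von~Neumann, and symmetrizes over $S_{m_1}\times\cdots\times S_{m_M}$ to pull a convex decomposition back to $D$; the identification $\phi^{-1}(\bar P_\sigma)=Y_{\mathcal I_\sigma}$ is exactly the block-averaging computation you sketch and goes through without trouble. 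This is clean and correct, but it imports an external theorem, whereas the paper's argument is self-contained and reuses the hypergraph language already set up for Lemma~\ref{lem:notvert} and the later transportation-polytope lemmas. Your suggested ``alternative route'' via leaf-stripping is closer in spirit to what the paper actually does.
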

\begin{proof}
To prove the second statement from the first, note that 
\[
 \left< A, Y_{\mathcal I} \right>_w = \sum_{i} \sum_{j \in I_i} A_{ij}.
\]
Thus we see that the \perm{} is equal to $\min_{\mathcal I} \left<A,Y_{\mathcal I}\right>_w$. Since the minimum over $D$ is achieved at a vertex, we see that (2) follows from (1).

To prove (1), we first we claim that $Y_{\mathcal I}$ is a vertex of $D$. Since each column has only one non-zero entry, that entry is determined the column sum, hence $Y_{\mathcal I}$ is determined by its support and is a vertex by Remark \ref{rem:vert}.

Now start with any $Y$ a vertex of $D$. Suppose there is pair $k,\ell$ such that $Y_{k\ell}>0$ and column $\ell$ has more than one nonzero entry, or equivalently $Y_{k\ell} < \frac{1}{m_k}$. All entries in row $k$ are less than or equal to $\frac{1}{m_k}$, and they all sum to 1. It follows that row $k$ must have at least two entries strictly less than $\frac{1}{m_k}$. Both columns containing these entries have more that one nonzero entry each. Translating this into the language of support hypergraphs, we have shown that if an edge contains a (hypergraph) vertex with valence at least 2, then it contains another (hypergraph) vertex with valence at least 2. It follows then that the support hypergraph contains a simple cycle, contradicting Lemma \ref{lem:notvert} (see Remark \ref{rem:vert}).  We conclude then that any vertex of $D$ has exactly one nonzero entry in each column. From this it quickly follows that $Y$ has the form of $Y_{\mathcal I}$ .
\end{proof}

\begin{definition}
The weighted transportation polytope $T$ is the set of non-negative $(N-1) \times N$ \matrices{} with row sums equal to $N$ and weighted column sums equal to $N-1$, that is $\sum_j A_{ij} = N$ for each $i$, and $\sum_i m_iA_{ij} = N-1$ for each $j$.

There are $N$ embeddings $\phi_1, \dots, \phi_N$ of the space of $(N-1)\times(N-1)$ \matrices{} into the space of $(N-1) \times N$ \matrices{} given by inserting a column of zeros. Notice that the Minkowski sum $\sum_{j=1}^N \phi_j(D)$ is contained in $T$.

Notice that Remark \ref{rem:vert} applies also to $T$.
\end{definition}

\begin{remark} \label{rmk:trans-poly}
The (unweighted) transportation polytope can be given the following ``physical" interpretation. Suppose one has $N$ factories, each of which produce $N-1$ units of a product. Suppose there are $N-1$ cities, each of which consume $N$ units of the product. The cost to transport a unit of the product from a factory to a city is given in the matrix $A$. Then $T$ is the feasible region for this optimization problem and $Y \mapsto \left<A,Y\right>$ is the cost function.
\end{remark}

\begin{lemma} \label{lem:vertofT}
There is a bijection between vertices of the transportation polytope $T$ and connected linkage hypergraphs $G$, given by taking the support hypergraph of a vertex. Every vertex of $T$ is in the Minkowski sum $\sum_{j=1}^N \phi_j(D)$.
\end{lemma}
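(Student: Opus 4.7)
I will prove three claims in turn: (a) the support of any vertex of $T$ is a connected linkage hypergraph; (b) the induced support map is a bijection with the connected linkage hypergraphs; and (c) every vertex of $T$ lies in the Minkowski sum $\sum_j \phi_j(D)$.

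For (a), my plan is a two-sided count on the nonzero entries of a vertex $Y$. The weighted column constraint gives $Y_{ij} \le (N-1)/m_i$, so each row of $Y$ must contain at least $m_i + 1$ nonzero entries in order to sum to $N$, yielding a total of at least $M + N - 1$ nonzeros. Conversely, $T$ lies in an affine subspace of dimension $(M-1)(N-1)$---the $M + N$ equality constraints have a single linear dependency, namely the weighted total-sum identity $\sum_i m_i N = N(N-1) = \sum_j (N-1)$---so a vertex has at least $(M-1)(N-1)$ tight non-negativity constraints and hence at most $M + N - 1$ nonzero entries. Equality forces each row to have exactly $m_i + 1$ nonzeros, so the support is a linkage hypergraph with edge total $e = \sum m_i = N - 1$. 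Combining Remark \ref{rem:vert}, Lemma \ref{lem:notvert}, and Proposition \ref{hasloop} then forces the number of components to equal $v - e = N - (N-1) = 1$.

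For (b), injectivity is immediate from Remark \ref{rem:vert}. For surjectivity, given a connected linkage hypergraph $G$, I form the bipartite incidence graph $B$ whose vertices are the $M$ rows and $N$ columns of the matrix and which has an edge for each row-column incidence in $G$. Then $B$ has $M + N$ vertices, $M + N - 1$ edges, is connected (inherited from $G$), and is acyclic because any simple cycle in $B$ alternates between rows and columns and would project to a simple cycle in $G$. Hence $B$ is a tree. Every leaf of $B$ must be a column (since rows have degree $m_i + 1 \ge 2$), and the unique entry incident to such a leaf is forced to be $(N-1)/m_i > 0$ by the weighted column-sum constraint; stripping leaves iteratively produces a unique non-negative solution supported on $G$, which is a vertex of $T$.

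Claim (c) is the main technical obstacle. Given a vertex $Y$ with bipartite tree $B$, I define $V_j$ as the tree-flow solution on $B$ with the column vertex $c_j$ removed and with unit row supplies and unit weighted column demands. The crucial verification is that each connected component of $B \setminus \{c_j\}$ with row set $R$ and column set $K$ satisfies the balance identity $|K| = \sum_{i \in R} m_i$; this reduces to a degree sum in the subtree, using the equality $r_i = m_i + 1$ established in (a) together with the observation that each component of $B \setminus \{c_j\}$ contains exactly one row adjacent to $c_j$ in $B$. Balance guarantees a positive tree-flow solution, so $V_j \in D$. Finally, $Y = \sum_j \phi_j(V_j)$ holds because both sides have the same row sums, the same weighted column sums, and the same support (every edge of $B$ appears in some $\phi_j(V_j)$ because the tree-flow with positive supplies and demands assigns positive flow to every tree edge), so the uniqueness in (b) identifies them.
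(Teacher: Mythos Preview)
Your overall architecture mirrors the paper's: associate a tree to the connected linkage hypergraph and use ``flow toward $j$'' to produce elements of $\phi_j(D)$ summing to the desired vertex of $T$. The paper builds the auxiliary tree $\hat G$ of Notation~\ref{graphify} (hyperedges replaced by spanning subtrees on the column vertices), while you use the bipartite incidence graph $B$; both work, and your linear-programming dimension count in (a) is a clean alternative to the paper's direct appeal to Proposition~\ref{hasloop}.

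There is, however, a genuine gap in (c). The assertion ``balance guarantees a positive tree-flow solution'' is false: balance of supplies and demands on a tree forces existence and uniqueness of a solution, not non-negativity. For instance, on the bipartite path $s_1\,$--$\,d_1\,$--$\,s_2\,$--$\,d_2$ with supplies $3,1$ at the $s$'s and demands $1,3$ at the $d$'s, the flow from $s_2$ toward $d_1$ is $-2$. What actually makes your $V_j$ non-negative is the degree--supply link you have not yet exploited: applying the same degree count you used for the balance identity, but now to the two pieces of $T'\setminus e$ for an edge $e=(r_i,c_\ell)$, shows that the net supply on the $r_i$-side is exactly $[r_{i^*}\in S_r]\in\{0,1\}$ (where $r_{i^*}$ is the unique row of $T'$ adjacent to $c_j$). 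Thus each entry of $V_j$ lies in $\{0,1/m_i\}$. In particular some entries of an individual $V_j$ \emph{are} zero, so your stated reason for ``every edge of $B$ appears in some $\phi_j(V_j)$'' also fails; the correct argument is that for a fixed edge $e=(r_i,c_\ell)$ one may choose $c_j$ in the $r_i$-component of $B\setminus e$ (such a column exists since $\deg_B r_i\ge 2$), and then $(V_j)_{i,\ell}=1/m_i>0$.

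The same issue infects (b): leaf-stripping on $B$ determines a unique solution, but you have not shown it is non-negative, and your sample computation only covers the first layer of leaves. The paper avoids both problems by writing down $D_j$ with explicit entries in $\{0,1/m_i\}$ (via the orientation $\hat f_j$ on $\hat G$), so non-negativity is manifest, and then checking that $Y:=\sum_j D_j$ has support exactly $G$; this single construction simultaneously yields surjectivity and the Minkowski-sum claim. Your argument can be repaired along the lines above, but as written the sign claims are unjustified.
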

\begin{proof}
First we check that the support hypergraph of a vertex $Y$ is a connected linkage hypergraph. We claim that no row $i$ can have less than $m_i +1$ nonzero entries. Otherwise, since the row sums are $N$, some entry must be greater than or equal to $N/m_i$. But then the weighted column sum of the column containing that entry is at least $N$, so $Y$ could not be in $T$. 

Now, if any row has too many nonzero entries, then the support hypergraph must by Proposition \ref{hasloop} contain a simple cycle and $Y$ would not be a vertex (see again Remark \ref{rem:vert}). So we conclude that the support hypergraph of $Y$ is a linkage hypergraph.

Next, we start with a connected support linkage hypergraph $G$ and construct a vertex $Y$ of $T$ with support hypergraph $G$. Form $\hat G$ as in Remark \ref{graphify}. Notice that $\hat G$ is a tree, so for each vertex $j$ we can consider the ``flow" function $\hat f_j: V(\hat G) -\{j\} \rightarrow E(\hat G)$ that assigns to each vertex $\ell \neq j$ the first edge in the unique path from $\ell$ to $j$. Let $E_{xy}$ be the matrix with zeros everywhere except for a $1$ in the $(x,y)$ entry. Let $f_j(\ell) = \psi \circ \hat f_j(\ell)$ (here $\psi$ is as in Remark \ref{graphify}) and define
\[
    D_j = \sum_{\ell \neq j} \frac{1}{m_{f_j(\ell)}} E_{f_j(\ell), \ell}.
\]
It is easy to check $D_j \in \phi_j(D)$, so $Y:=\sum_j D_j$ is contained in the Minkowski sum and thus in $T$. We also see that $Y$ has support hypergraph $G$. By Proposition \ref{hasloop}, $G$ has no cycle, so by Remark \ref{rem:vert} it is a vertex. This construction also shows how each vertex is in the Minkowski sum as claimed.
\end{proof}



\begin{corollary} \label{lem:sumDi}
$T$ is precisely the Minkowski sum $\sum_{j=1}^N \phi_j(D)$.
\end{corollary}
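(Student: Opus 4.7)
The plan is to leverage Lemma \ref{lem:vertofT} together with the fact that both $T$ and the Minkowski sum $\sum_{j=1}^N \phi_j(D)$ are polytopes in the same ambient space, reducing the proof to a containment argument at the level of vertices.

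First I would observe that $T$ is indeed a polytope, not merely a polyhedron: the non-negativity constraints together with the row-sum equations $\sum_j A_{ij} = N$ force $0 \le A_{ij} \le N$, so $T$ is bounded (and clearly nonempty), and it is cut out by finitely many linear (in)equalities. By the Minkowski--Weyl theorem, $T$ equals the convex hull of its vertices.

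Next I would invoke Lemma \ref{lem:vertofT}: every vertex of $T$ lies in $\sum_{j=1}^N \phi_j(D)$. Since the Minkowski sum of convex sets is convex, the convex hull of these vertices is contained in $\sum_{j=1}^N \phi_j(D)$. Thus $T \subseteq \sum_{j=1}^N \phi_j(D)$.

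The reverse inclusion $\sum_{j=1}^N \phi_j(D) \subseteq T$ was already recorded in the definition of $T$: each $\phi_j(D)$ consists of non-negative $(N{-}1)\times N$ \matrices{} whose only nonzero columns have row-sum $1$ and weighted column-sum $1$, so summing over $j=1,\dots,N$ yields row sums equal to $N$ and weighted column sums equal to $N-1$, which places the sum in $T$. Combining the two containments gives the desired equality. The only nontrivial step here is the vertex classification of Lemma \ref{lem:vertofT}, which has already been established; no further obstacle is anticipated.
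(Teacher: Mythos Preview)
Your proposal is correct and follows essentially the same route as the paper: one containment is already noted in the definition of $T$, and the other follows because $T$ is the convex hull of its vertices and Lemma \ref{lem:vertofT} places every vertex of $T$ in the (convex) Minkowski sum. The only difference is that you spell out why $T$ is bounded, a detail the paper leaves implicit.
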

\begin{proof}
We have noted that the Minkowski sum is contained in $T$. For the other inclusion, since both sets are convex polytopes, it is sufficient to show that any vertex of $T$ is in the Minkowski sum, which is the second statement in Lemma \ref{lem:vertofT}.
\end{proof}

\begin{lemma} \label{lem:opt}
Consider the optimization problem
\begin{equation} \label{eq:opt}
	\min_{Y\in T} \left<A,Y\right>_w.
\end{equation}
The solution to \eqref{eq:opt} is unique if and only if $A$ has \nsing{} minors.
\end{lemma}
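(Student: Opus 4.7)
The plan is to use the Minkowski decomposition $T = \sum_{j=1}^N \phi_j(D)$ from Corollary \ref{lem:sumDi} to reduce the single optimization over $T$ to $N$ independent optimizations over copies of $D$, at which point Lemma \ref{square-perm} supplies the bridge to \nsing{}ity.

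First I would observe that for $Y_j \in \phi_j(D)$ the $j$-th column of $Y_j$ is zero, so $\left<A,Y_j\right>_w$ depends only on the $(N-1)\times(N-1)$ submatrix $A^{(j)}$ obtained by deleting the $j$-th column of $A$. More precisely $\left<A,Y_j\right>_w = \left<A^{(j)},\bar Y_j\right>_w$ for the corresponding $\bar Y_j \in D$, so Lemma \ref{square-perm} yields $\min_{Y_j \in \phi_j(D)} \left<A,Y_j\right>_w = \permop(A^{(j)})$, which is exactly the $j$-th maximal \minor{} of $A$, and this minimum is uniquely attained if and only if $A^{(j)}$ is \nsing{}.

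Next, for any $Y \in T$ Corollary \ref{lem:sumDi} provides at least one decomposition $Y = \sum_j Y_j$ with $Y_j \in \phi_j(D)$, and linearity of the inner product gives $\left<A,Y\right>_w = \sum_j \left<A,Y_j\right>_w \ge \sum_j \permop(A^{(j)})$, with equality iff each $Y_j$ is individually optimal for its factor. Hence the minimum in \eqref{eq:opt} equals $\sum_j \permop(A^{(j)})$, and the optima in $T$ are precisely the Minkowski sums of optima of the individual $\phi_j(D)$.

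Uniqueness then follows quickly. If every maximal \minor{} is \nsing{}, each $\phi_j(D)$ contains a single optimum $Y_j^*$, so $\sum_j Y_j^*$ is the unique minimizer in $T$. Conversely, if some $A^{(j_0)}$ is \sing{}, pick distinct optima $Y_{j_0}^{(1)} \neq Y_{j_0}^{(2)}$ in $\phi_{j_0}(D)$ and any optima $Y_j^*$ for $j \neq j_0$; then $Y_{j_0}^{(1)} + \sum_{j \neq j_0} Y_j^*$ and $Y_{j_0}^{(2)} + \sum_{j \neq j_0} Y_j^*$ are two distinct optima in $T$, since their difference equals $Y_{j_0}^{(1)} - Y_{j_0}^{(2)} \neq 0$. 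The one subtle point to verify is that a given $Y \in T$ can admit several Minkowski decompositions, but since the lower bound $\sum_j \permop(A^{(j)})$ holds for \emph{every} decomposition, the characterization of optima is unaffected, making this harmless.
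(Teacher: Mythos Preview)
Your proposal is correct and follows essentially the same route as the paper: decompose the optimization over $T$ via the Minkowski sum $T=\sum_j \phi_j(D)$ from Corollary \ref{lem:sumDi}, then invoke Lemma \ref{square-perm} on each factor. Your write-up is in fact more explicit than the paper's, which stops after observing that the summands are uniquely minimized iff the corresponding \minor{}s are \nsing{}; you spell out why uniqueness at the level of the factors transfers to uniqueness over $T$ (and vice versa), including the remark that non-uniqueness of the Minkowski decomposition of a given $Y$ is harmless because the lower bound holds for every decomposition.
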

\begin{proof}
By Corollary \ref{lem:sumDi}, we see that \eqref{eq:opt} can be rewritten as 
\begin{align}
	&\min_{\{Y_j \in \phi_j(D)\}_j} \left<A, \sum_j Y_j\right>_w \\
= \sum_j &\min_{Y_j \in \phi_j(D)} \left<A, Y_j\right>_w \label{eq:rewrite}
\end{align}
But by Lemma \ref{square-perm}, we know that the summands in \eqref{eq:rewrite} have solutions equal to the \minor{}s of $A$ which are unique if and only if the \minor{}s are \nsing{}.
\end{proof}

\begin{proof}[Proof of Theorem \ref{n-1xn}]
First we check that the vector of maximal minors is in the \kernal{}. By rescaling the columns, we may assume the the first row has every entry equal to 0. The \emph{lower} minors of $A$ are the minors obtained by deleting the first row and $m_1+1$ columns. 

Notice that the value of the $i$th maximal minor is equal to the smallest value of a lower minor contained in it.

Let $L$ be the set of $N-1-m_1$ indexes defining the minimal lower minor. Hence for any $i \notin L$, the $i$th maximal minor has this value, and is minimal among maximal minors. Hence we see that the vector of maximal minors is in the kernel of the first row. As the Theorem is invariant under permutation of rows, we are done.

Next, suppose $A$ has a singular minor. By Theorem \ref{square}, the singular minor has a vector in its \kernal{}. Extending this vector by inserting any sufficiently large entry will create a vector in the \kernal{} of $A$.

Now suppose $A$ has two elements $\vx$ and $\vy$ of its kernel. There is a linkage hypergraph complementary to $A_\vx$ (see Notation \ref{not:Ax}). If it is not connected, then by Lemma \ref{game} we may rescale to produce a non-negative $A'$ with two complementary connected linkage hypergraphs. Each of these hypergraphs corresponds to a vertex of $T$, and since they are complementary to $A'$ they achieve the optimal value of $0$ in \eqref{eq:opt}, so by Lemma \ref{lem:opt}, $A'$ and hence also $A$ has a \sing{} minor. The same argument applies to $A_\vy$.


So now we may assume that both $A_\vx$ and $A_\vy$ have connected complementary linkage hypergraphs. Again, each of these trees corresponds to an optimal solution to \eqref{eq:opt} by Lemma \ref{lem:vertofT}, so if they are distinct, we must have a singular minor.

So finally, we may assume that $A_\vx$ and $A_\vy$ both have the same connected complementary linkage hypergraphs. Let $\min^x_i = \min_j\{A_{ij} + x_j\}$ and $\min^y_i = \min_j\{A_{ij} + y_j\}$. By construction, we see that for any edge $i$ touching vertex $j$ in this hypergraph, we have $x_j +A_{ij}= \min^x_i$ and $y_j +A_{ij}= \min^y_i$, hence $x_j-y_j = \min_i^x - \min_i^y$. It follows that if there is an edge from $j$ to $k$ in the support hypergraph, then $x_k-y_k = x_j -y_j$. But since the hypergraph is connected, this implies that $\vx$ is a tropical scalar multiple of $\vy$.
\end{proof}

\begin{remark}
	\label{rmk:gap}
	In \cite{richter-gebert_first_2005}, something like Lemma \ref{game} appears to be being used implicitly. On page 19 (of the arXiv version), the authors start with a $(N-1) \times N$ tropical matrix $C$ that is assumed to have nonsingular minors. From Theorem 2.4 in \cite{sturmfels_maximal_1993}, one knows that there is an associated linkage tree (from which the optimizing permutations of the minors can be extracted). The matrix $C$ is then rescaled so that it is non-negative and has at least two zeros on each row and one zero in each column.  It is then claimed that the linkage tree of $C$ is complementary to (the rescaled version of) $C$. For smaller values of $N$ it is easy to see that if the zero patterns do not form a tree then there is a singular minor (with tropical permanent equal to 0), but for larger $N$ this is not obvious to us. For example, the pattern of zeros could be represented by a graph as in Figure \ref{fig:gap}.
	
	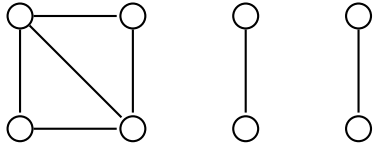
\begin{figure}[!htbp]
\begin{center}
\begin{tikzpicture}[-,>=stealth',shorten >=1pt,auto,node distance=1.5cm,
                    thick,main node/.style={circle,draw,font=\sffamily\Large\bfseries}]

  \node[main node] (1) {};
  \node[main node] (2) [right of=1] {};
  \node[main node] (3) [right of=2] {};
  \node[main node] (4) [right of=3] {};
  \node[main node] (5) [below of=1] {};
  \node[main node] (6) [right of=5] {};
  \node[main node] (7) [right of=6] {};
  \node[main node] (8) [right of=7] {};

  \path[every node/.style={font=\sffamily\small}]
    (1) edge node  {} (2)
        edge node  {} (6)
        edge node  {} (5)
    (5) edge node  {} (6)
    (2) edge node  {} (6)
    (3) edge node {} (7)
    (4) edge node {} (8);
\end{tikzpicture}
\end{center}
\caption{An interesting zero pattern}
\label{fig:gap}
\end{figure}

	In this case, there is no minor that is obviously singular. No minor has permanent equal to zero and which minor is singular will depend on the matrix itself and not only on the zero pattern. It seems to us that an argument like Lemma \ref{game} is necessary here.
\end{remark}

\bibliographystyle{plain}
\bibliography{Zotero}

\end{document}